\documentclass[12pt,a4paper,twoside]{amsart}
\usepackage{amsfonts, amsthm, amsmath, amssymb}
\usepackage{hyperref}
\hypersetup{colorlinks=false}

\usepackage[margin=1.25in]{geometry}

\usepackage{helvet}

\usepackage{amsthm}
\newtheorem{theorem}{Theorem}
\newtheorem{lemma}{Lemma}[section]
\newtheorem{remark}{Remark}

\newtheorem{conjecture}{Conjecture}[section]
\newtheorem{proposition}{Proposition}
\newtheorem{corollary}{Corollary}

\begin{document}

\author{Kummari Mallesham}
\title{On pairs of quadratic forms in five variables}

\address{ Kummari Mallesham \newline {\em Stat-Math Unit, Indian Statistical Institute, 203 B.T. Road, Kolkata 700108, India; \newline  Email: iitm.mallesham@gmail.com
} }

\maketitle

\begin{abstract}
In this article, we obtain an upper bound for the number of integral solutions, of given height, of system of two quadratic forms in five variables. Our bound is an improvement over the bound given by Henryk Iwaniec and Ritabrata Munshi in \cite{H-R}.
\end{abstract}

\section{Introduction}
Let $\phi_{1}(x_{1},\ldots,x_{5})$ and $\phi_{2}(x_{1},\ldots,x_{5})$ be two quadratic forms over $\mathbb{Z}$ in five variables. Then the system of quadratic equations 
\begin{equation*}
\phi_{1}(x_{1},\ldots,x_{5})=0, \phi_{2}(x_{1},\ldots,x_{5})=0
\end{equation*}
define a quartic del Pezzo surface ${\bf V} \subset \mathbb{P}^{4}$. It is very interesting question to understand the estimates for the counting function
\begin{equation}
N(B) = \vert \lbrace {\bf x} \in \mathbb{Z}^{5} : {\bf x} \, \, \text{is primitive}, \, {\bf x} \in U(\mathbb{Q}), \, \vert x_{i} \vert \leq B \rbrace\vert, 
\end{equation} 
where ${\bf x}$ primitive means gcd $(x_{1},\ldots,x_{5})=1$ and $x_{1}$ is positive, and $U$ be the open subset of ${\bf V}$ obtained by deleting lines from it. We have the following conjecture:
\begin{conjecture} \label{conj}
We have 
$$N(B) \sim c \, B \, \left (\log B \right)^{t-1},$$
where $c$ and $t$ are constants depending only on $\phi_{1}$ and $\phi_{2}$.
\end{conjecture}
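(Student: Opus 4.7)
The plan is to address Conjecture~\ref{conj} via the Hardy--Littlewood circle method applied simultaneously to the two quadratic equations. I would start by smoothing: replace the sharp cutoff $|x_{i}| \le B$ by a smooth weight $w({\bf x}/B)$ compactly supported away from the accumulating subvarieties (the lines removed to pass from ${\bf V}$ to $U$), enforce primitivity by M\"obius inversion, and write
\[
N(B) \;=\; \sum_{\bf x} w({\bf x}/B)\int_{[0,1]^{2}} e\bigl(\alpha_{1}\phi_{1}({\bf x}) + \alpha_{2}\phi_{2}({\bf x})\bigr)\, d\alpha_{1}\, d\alpha_{2} \;+\; \text{l.o.t.}
\]
Next I would dissect the torus $[0,1]^{2}$ into major arcs around rationals $(a_{1}/q,a_{2}/q)$ with $q$ small and minor arcs elsewhere.

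On the major arcs, the expected main term should emerge in the shape $c\,B\,(\log B)^{t-1}$, where the singular series $\mathfrak{S}$ is the product of $p$-adic densities attached to $\phi_{1}=\phi_{2}=0$ and the singular integral $\mathfrak{I}$ is the real density near the origin; the logarithmic exponent $t-1$ should then match the rank of the Picard lattice of~${\bf V}$, in accordance with Peyre's refinement of Manin's conjecture. Matching $c$ with the Peyre constant would be a routine but delicate check involving Tamagawa measures, weak approximation, and (if it fails) the Brauer--Manin obstruction.

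The hard part will be the minor-arc estimate. With only five variables for a pair of quadratic forms we sit exactly at the circle-method threshold: the ratio of variables to equations is $5/2$, and neither a direct Weyl bound nor Kloosterman's refinement yields a power saving. This is precisely why \cite{H-R} obtains only an upper bound and not the asymptotic. To push through to an asymptotic I would attempt the Heath-Brown $\delta$-method adapted to two simultaneous equations, combined with an averaging over the ``spectral'' direction parametrising the pencil $\lambda_{1}\phi_{1}+\lambda_{2}\phi_{2}$, exploiting the cancellation in the associated quadratic Gauss sums away from the discriminant locus of the pencil, where generically one quadric in the family is non-degenerate of rank five.

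The main obstacle throughout is extracting an asymptotic rather than just an upper bound from the minor arcs, because the contributions from nearly-singular members of the pencil (where the rank of $\lambda_{1}\phi_{1}+\lambda_{2}\phi_{2}$ drops) are marginal by standard analytic estimates. It is therefore likely that a purely analytic argument must be supplemented by a universal-torsor parametrisation of $U(\mathbb{Q})$, descending the counting problem to a lattice point count on a torsor of lower Picard rank, which would also pin down the precise value of the exponent $t$ and the leading constant $c$.
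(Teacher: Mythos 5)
This statement is labeled as a \emph{conjecture} in the paper precisely because it is open: the paper itself says the asymptotic is ``far from being proved for non-singular del Pezzo surfaces of degree 4,'' and the paper's own contribution (Theorem~\ref{malthe}) is only the upper bound $N(B) \ll B^{5/3+\epsilon}$, not the conjectured asymptotic $N(B)\sim c\,B(\log B)^{t-1}$. There is therefore no ``paper's proof'' for you to match, and no complete proof is expected.

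Your write-up is best read as a research sketch rather than a proof, and you implicitly concede this: you say the minor-arc estimate ``will be hard,'' that five variables for two quadratic equations sits ``exactly at the circle-method threshold,'' and that the analytic argument would ``likely'' need to be supplemented by a universal-torsor parametrisation. Those hedges are exactly where the genuine gaps lie. Concretely: (i) on the minor arcs you have no power saving, and you offer no replacement for one, so the main-term extraction in your first display is unjustified; (ii) the major-arc analysis would need to produce not just a singular series and singular integral but the full logarithmic power $(\log B)^{t-1}$ with $t$ equal to the Picard rank, and the standard Hardy--Littlewood setup for a single smooth count does not do this automatically --- for degree-4 del Pezzo surfaces one typically needs either Peyre's conjectural framework plus a torsor descent or a conic-bundle fibration argument, neither of which you carry out; (iii) for singular members of the pencil $\lambda_1\phi_1+\lambda_2\phi_2$ you note the contributions are ``marginal,'' which is an admission, not an estimate. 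In short, your plan identifies the right landscape of tools (delta method, pencil of quadrics, torsors, Peyre's constant) but does not close any of the three gaps that make Conjecture~\ref{conj} open. The paper sidesteps all of this by aiming only for an upper bound on $M(B)$ (square sieve plus circle method with a congruence-equation trick), which is a far more modest and correspondingly achievable goal.
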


This is a special case of a more general conjecture formulated by Batyrev, Franke, Manin, Tschinkel and others for Fano varieties. Here we are considering the case of del Pezzo surfaces, Fano varieties of dimension 2, of degree 4. For  more details of this conjecture we refer reader to \cite{Tim}.

Some progress has been made towards the conjecture \ref{conj} by T. D. Browning and De La Bret\`eche in \cite{Tim-Bret} when ${\bf V}$ contains isolated singularities. However, the conjecture is far being proved for non-singular del Pezzo surface of degree 4. One can easily obtain the bound $N(B) \ll B^{2+\epsilon}$. In general improving this bound is difficult. 

In \cite{H-R}, H. Iwaniec and R. Munshi obtained bounds of the form $N(B) \ll B^{2-\delta}$ for some absolute constant $\delta >0$ under certain mild conditions on the forms $\phi_{1}$ and $\phi_{2}$. Suppose the forms $\phi_{1}$ and $\phi_{2}$ are compatible in the sense that there is an unimodular  transformation that transforms $\phi_{1}$ and $\phi_{2}$ to 
$\psi_{1} \left({\bf x}\right) = a_{1} x_{1}^2+a_{2} x_{2}^2+a_{3}x^2+a_{4}x^2$
and $\psi^{\prime}_{2} \left({\bf x}\right) = b_{1} x_{1}^2+b_{2} x_{2}^2+b_{3}x_{3}^2+b_{4}x_{4}^2 -x_{5}^2$ respectively, with the condition that all  $2 \times2$ minors of the matrix 
\[
\begin{bmatrix}
a_{1} & a_{2} & a_{3} & a_{4} \\
b_{1} & b_{2} & b_{3} & b_{4} 
\end{bmatrix}
\]
are non-zero, where $a_{i}$'s and $b_{i}$'s are the coefficients appearing in the diagonal forms $\psi_{1}$ and $\psi^{\prime}_{2}$ respectively and $\alpha = a_{1}a_{2} a_{3} a_{4}$ is  not a square. We set $\mathcal{D}$ to be the products of all these minors. For ${\bf x} \in \mathbb{Z}^{4}$, we set $\psi_{2}({\bf x})= b_{1} x_{1}^2+b_{2} x_{2}^2+b_{3}x_{3}^2+b_{4}x_{4}^2 $. We note, in this case, that
$$N(B) \leq M(B),$$
where
$$M(B) = \vert \lbrace {\bf x} \in \mathbb{Z}^{4} : \, \psi_{1}({\bf x}) =0, \, \psi_{2}({\bf x})= \square, \, \vert x_{i} \vert  \leq B \rbrace\vert .$$

In \cite{H-R}, H. Iwaniec and R. Munshi obtained non-trivial bounds for the quantity $M(B)$ using the circle method and square sieve of Heath-Brown. More precisely, they have the following theorem.

\begin{theorem}[H. Iwaniec and R. Munshi, \cite{H-R}]
Under the condition on the forms $\psi_{1}$ and $\psi_{2}$ stated above, we have
\begin{equation} \label{hrbound}
M(B) \ll B^{\frac{9}{5}+ \epsilon},
\end{equation}
where the constant depends only on the forms $\psi_{1}, \psi_{2} $ and $\epsilon$.
\end{theorem}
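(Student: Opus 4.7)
The plan is to combine Heath-Brown's square sieve with a circle-method treatment of the quadric $\psi_{1}(\mathbf{x}) = 0$. Fix a parameter $P \in [1, B]$ to be optimised, and let $\mathcal{P}$ be the set of primes in $(P, 2P]$ coprime to $2\mathcal{D}$, so that $|\mathcal{P}| \asymp P/\log P$. Heath-Brown's square sieve then bounds
\begin{equation*}
M(B) \ll \frac{B^{2+\epsilon}}{|\mathcal{P}|} + \frac{1}{|\mathcal{P}|^{2}} \sum_{\substack{p, q \in \mathcal{P} \\ p \neq q}} |T(p,q)|, \qquad T(p,q) := \sum_{\substack{\mathbf{x} \in \mathbb{Z}^{4}, \, \psi_{1}(\mathbf{x}) = 0 \\ |x_{i}| \leq B}} \Bigl(\frac{\psi_{2}(\mathbf{x})}{pq}\Bigr),
\end{equation*}
where the first term absorbs both the diagonal $p = q$ and the $\mathbf{x}$ with $\psi_{2}(\mathbf{x}) = 0$, using the trivial bound $O(B^{2+\epsilon})$ on the number of points of $\psi_{1} = 0$ in the box.

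To bound an individual $T(p,q)$, I would detect $\psi_{1}(\mathbf{x}) = 0$ via the Duke--Friedlander--Iwaniec $\delta$-expansion at scale $Q \asymp B$, writing
\begin{equation*}
T(p,q) \approx \frac{1}{Q^{2}} \sum_{c \leq Q} \frac{1}{c} \sum_{\substack{a \pmod{c} \\ (a,c)=1}} \sum_{\mathbf{x}} e_{c}\!\bigl(a \psi_{1}(\mathbf{x})\bigr) \Bigl(\frac{\psi_{2}(\mathbf{x})}{pq}\Bigr) w(\mathbf{x}/B),
\end{equation*}
with $w$ a fixed smooth cutoff. Poisson summation in $\mathbf{x}$ modulo $cpq$ converts the inner sum into
\begin{equation*}
\frac{B^{4}}{(cpq)^{4}} \sum_{\mathbf{m} \in \mathbb{Z}^{4}} \mathcal{C}(a;\mathbf{m};c,p,q)\, \mathcal{I}(\mathbf{m};c,B),
\end{equation*}
where the Fourier integral $\mathcal{I}$ is negligible beyond $|\mathbf{m}| \ll (cpq)^{1+\epsilon}/B$, and $\mathcal{C}$ is a complete mixed exponential-character sum which factors by the Chinese Remainder Theorem into a quartic Gauss sum attached to $\psi_{1}$ modulo $c$ and two Jacobi-symbol-twisted Gauss sums attached to $\psi_{2}$ modulo $p$ and modulo $q$.

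The heart of the argument is the estimation of these complete sums. The Gauss sum modulo $c$ gives the classical square-root cancellation in $c$. For the twisted sums $\sum_{\mathbf{y} \pmod{p}} \bigl(\frac{\psi_{2}(\mathbf{y})}{p}\bigr) e_{p}(\mathbf{m}' \cdot \mathbf{y})$, the diagonal form of $\psi_{2}$ decouples the variables, reducing matters to one-dimensional character-exponential sums of Sali\'e or Kloosterman type, each of square-root size provided the dual variable is non-zero; the condition that $\mathbf{m}'$ avoids the exceptional locus where this degenerates is controlled by the hypothesis that the $2 \times 2$ minors of the coefficient matrix are non-zero, i.e.\ by $\mathcal{D} \neq 0$. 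The contribution of the exceptional $\mathbf{m}'$ must be isolated and bounded separately using the same minor condition, which is precisely why it appears in the statement.

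Assembling these estimates and summing over $a$, $c$, $\mathbf{m}$ before averaging over pairs $(p,q) \in \mathcal{P}^{2}$, one compares the resulting off-diagonal bound with the diagonal contribution $B^{2+\epsilon}/|\mathcal{P}|$; choosing $P$ as an appropriate positive power of $B$ balances the two and yields $M(B) \ll B^{9/5+\epsilon}$. The principal obstacle in executing this plan is the uniform evaluation of $\mathcal{C}$ over all $\mathbf{m}$, and in particular keeping control of the aggregate contribution of the degenerate $\mathbf{m}$ (those lying on the dual locus where the Jacobi-symbol cancellation vanishes) without sacrificing the crucial $B^{1/5}$ saving over the trivial bound.
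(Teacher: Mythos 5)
The paper cites this theorem from \cite{H-R} rather than proving it, and your sketch correctly reproduces the Iwaniec--Munshi strategy (square sieve followed by the $\delta$-method at scale $Q\asymp B$, Poisson summation modulo $c p q$, and square-root cancellation in the resulting complete sums), which is also the backbone of the present paper's argument except that the paper additionally applies the congruence--equation trick to pull $\psi_{1}(\mathbf{x})\equiv 0\pmod q$ out before invoking the $\delta$-symbol, lowering the scale to $Q=B/\sqrt{q}$ and thereby sharpening the exponent from $9/5$ to $5/3$. Your plan is sound at this level of detail, though the CRT factorisation of the complete sums is cleaner than stated only when $(c,pq)=1$; the cases $p\mid c$ or $q\mid c$ do not factor into a pure Gauss sum modulo $c$ times twisted sums modulo $p$ and $q$, and require the separate $S_{p,p^{r}}$-type analysis carried out in the paper and in \cite{H-R}.
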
 

Our aim in this article is to improve the bound on $M(B)$ that has given in \eqref{hrbound}. We state our bound on $M(B)$ as a following theorem.
\begin{theorem} \label{malthe}
Under the same conditions on the forms $\psi_{1}$ and $\psi_{2}$ given above, we have that
\begin{equation} \label{malbound}
M(B) \ll B^{\frac{5}{3} + \epsilon},
\end{equation}
where the constant depends only on the forms $\psi_{1}, \psi_{2} $ and $\epsilon$.
\end{theorem}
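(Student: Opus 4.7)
I plan to follow the Iwaniec--Munshi strategy --- combining Heath-Brown's square sieve with a circle-method treatment of the constraint $\psi_1(\mathbf{x})=0$ --- and to obtain the stronger bound by sharpening the analysis of the resulting complete exponential sums and choosing the sieving parameter more aggressively. Since $\psi_1$ is an indefinite diagonal quaternary form with non-zero discriminant, the number of $\mathbf{x}\in\mathbb{Z}^{4}$ with $|x_i|\le B$ and $\psi_1(\mathbf{x})=0$ is of order $B^{2}$ by the classical Hardy--Littlewood asymptotic. Pick a parameter $P>0$ and let $\mathcal{P}$ be the set of primes $p\asymp P$ with $p\nmid 2\mathcal{D}$, so $|\mathcal{P}|\asymp P/\log P$. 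Heath-Brown's square sieve gives
\begin{equation*}
M(B)\;\ll\;\frac{B^{2}}{P}\;+\;\frac{1}{|\mathcal{P}|^{2}}\sum_{\substack{p_{1},p_{2}\in\mathcal{P}\\p_{1}\ne p_{2}}}|T(p_{1},p_{2})|,\qquad T(p_{1},p_{2}):=\sum_{\substack{|x_{i}|\le B\\\psi_{1}(\mathbf{x})=0}}\left(\frac{\psi_{2}(\mathbf{x})}{p_{1}p_{2}}\right),
\end{equation*}
reducing matters to a uniform estimate for the character sum $T(p_{1},p_{2})$.

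To estimate $T(p_{1},p_{2})$, I introduce a smooth weight $w(\mathbf{x}/B)$ and detect $\psi_{1}(\mathbf{x})=0$ via the Duke--Friedlander--Iwaniec $\delta$-symbol with $Q\asymp B$, opening it as a sum over moduli $q\le Q$ and residues $a\bmod q$ producing the additive character $e_{q}(a\psi_{1}(\mathbf{x}))$. Combining this with the Jacobi symbol $\left(\psi_{2}(\mathbf{x})/p_{1}p_{2}\right)$ and applying Poisson summation in $\mathbf{x}$ modulo $qp_{1}p_{2}$, one arrives at a dual sum over $\mathbf{y}\in\mathbb{Z}^{4}$ whose arithmetic factor is the complete mixed character sum
\begin{equation*}
C(a,\mathbf{y};q,p_{1},p_{2})\;=\;\sum_{\mathbf{z}\bmod qp_{1}p_{2}}e_{q}\bigl(a\psi_{1}(\mathbf{z})\bigr)\left(\frac{\psi_{2}(\mathbf{z})}{p_{1}p_{2}}\right)e_{qp_{1}p_{2}}(\mathbf{y}\cdot\mathbf{z}),
\end{equation*}
and whose analytic factor is an oscillatory integral confining $|y_{i}|\ll qp_{1}p_{2}/B$. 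Because $\psi_{1}$ and $\psi_{2}$ are diagonal, $C$ factors coordinate-wise and modulus-wise into products of quadratic Gauss sums modulo $q$ and Sali\'{e}-type sums modulo $p_{j}$; away from an exceptional dual locus controlled by the non-vanishing of the $2\times 2$ minors of the coefficient matrix, one obtains the square-root bound $|C|\ll q^{2}(p_{1}p_{2})^{2}$.

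Inserting this estimate, summing over $\mathbf{y},a,q$, and balancing the two terms in the square-sieve inequality by taking $P=B^{1/3}$ is expected to deliver $M(B)\ll B^{5/3+\epsilon}$. The main obstacle, and the reason the argument of \cite{H-R} is limited to $B^{9/5+\epsilon}$, is the treatment of the exceptional dual vectors $\mathbf{y}$ on which the Sali\'{e} sums modulo $p_{j}$ degenerate and the trivial bound is used. The improvement requires upgrading this trivial estimate to a Weil-type bound, exploiting the hypothesis that every $2\times 2$ minor of $\begin{pmatrix}a_{1}&a_{2}&a_{3}&a_{4}\\b_{1}&b_{2}&b_{3}&b_{4}\end{pmatrix}$ is non-zero to guarantee that the auxiliary binary forms appearing on the dual side stay non-degenerate modulo each $p_{j}$; a subsequent Cauchy--Schwarz step over the exceptional contributions, combined with a large-sieve input for the $\mathbf{y}$-average, should absorb the residual loss and produce the required saving of $B^{2/15}$ over the Iwaniec--Munshi bound.
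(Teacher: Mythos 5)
Your proposal reproduces the Iwaniec--Munshi setup faithfully but misses the single idea that the paper actually uses to improve $B^{9/5}$ to $B^{5/3}$, and the replacement you offer for it would not recover the saving.

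You apply the $\delta$-symbol directly to $\delta(\psi_{1}(\mathbf{x}))$ with modulus $Q\asymp B$, exactly as Iwaniec and Munshi do. The paper instead performs a preliminary ``congruence--equation trick'': since the squarefree modulus $q=p_{1}p_{2}$ from the square sieve is already present, one first restricts the $\mathbf{x}$-sum to the congruence $\psi_{1}(\mathbf{x})\equiv 0\pmod q$ and then detects only the residual equation $\psi_{1}(\mathbf{x})/q=0$, choosing $Q=B/\sqrt{q}$. This shrinks both the length of the modulus sum to $c\ll B/\sqrt{q}$ and, after Poisson summation modulo $qc$, the effective dual range to $|\mathbf{w}|\ll qc/B\ll\sqrt{q}$, as opposed to the range $\ll q$ one gets with $Q\asymp B$. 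The whole gain comes from lowering this conductor, and the author is explicit that this trick (borrowed from Browning--Munshi and from Munshi's subconvexity work) is what pushes the exponent from $9/5$ to $5/3$.

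Your alternative diagnosis --- that the bottleneck in \cite{H-R} is the exceptional dual locus where the Sali\'e-type sums degenerate, and that a Weil bound plus Cauchy--Schwarz and a large sieve there would yield the saving --- does not match the structure of the estimate. In the paper's bound $T_{q}(B)\ll(B^{2}/q^{3/2}+qB+B^{3/2}/q^{1/4})B^{\epsilon}$, the term $B^{3/2}/q^{1/4}$ coming from the exceptional locus $\tilde\psi_{1}(\mathbf{w})=0$ is already subdominant at the optimal choice $P=B^{1/3}$: the balance is between the square-sieve diagonal $B^{2}/P$ and the generic non-zero frequency term $qB\asymp P^{2}B$, and it is precisely the $\sqrt{q}$-shortening of the dual range (hence of the $\mathbf{w}$-sum producing that $qB$) that is responsible for the improvement. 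Sharpening the exceptional contribution alone, while keeping $Q\asymp B$, leaves the generic term --- and hence the final exponent --- essentially where Iwaniec and Munshi had it. To complete the argument along the paper's lines you would need to insert the congruence $\psi_{1}(\mathbf{x})\equiv 0\pmod q$ before opening the $\delta$-symbol, take $Q=B/\sqrt q$, and then carry out the multiplicativity analysis of $S_{q,c}(\mathbf{w})$ at the prime powers $S_{p,1}$, $S_{p,p^{r}}$, $S_{1,p^{r}}$ exactly as in Sections 5--7 of the paper.
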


We have the following:
\begin{corollary}
We have that
$$N(B) \ll_{\epsilon}  \, B^{5/3 +\epsilon}.$$
\end{corollary}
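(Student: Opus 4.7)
The plan is to read off the corollary essentially verbatim from the reduction $N(B) \leq M(B)$ recorded in the introduction, combined with the bound on $M(B)$ supplied by Theorem \ref{malthe}. The only genuine work is to confirm that this reduction is valid rather than merely schematic, and to absorb the harmless constants which arise from the normalising change of variables.

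First, since the corollary tacitly inherits the compatibility hypothesis of Theorem \ref{malthe}, I fix a unimodular $T \in \mathrm{GL}_5(\mathbb{Z})$ taking $\phi_1 \mapsto \psi_1$ and $\phi_2 \mapsto \psi_2'$. As $T$ and $T^{-1}$ have integer entries depending only on the forms, primitivity in $\mathbb{Z}^5$ is preserved, and the box $|x_i| \leq B$ for the original variables transforms into a region contained in a box $|y_i| \leq cB$ for the new variables, with $c = c(\phi_1,\phi_2)$. Hence $N(B)$ is bounded by the count of primitive ${\bf y} \in \mathbb{Z}^5$ satisfying $\psi_1({\bf y}) = \psi_2'({\bf y}) = 0$ and $|y_i| \leq cB$. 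Next I exploit the special shape of $\psi_2'$: the equation $\psi_2'({\bf y}) = 0$ is identically $\psi_2(y_1,\ldots,y_4) = y_5^2$, so every such ${\bf y}$ projects onto a quadruple counted by $M(cB)$, and for each such quadruple there are at most two corresponding values of $y_5$. This yields $N(B) \leq 2\,M(cB)$, and applying Theorem \ref{malthe} gives $N(B) \ll_\epsilon B^{5/3+\epsilon}$ after absorbing $c$ into the implicit constant.

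Given the straightforwardness of this deduction there is no real obstacle; the entire substance of the corollary is contained in Theorem \ref{malthe}. One minor point worth monitoring is the treatment of the line divisors which were removed from ${\bf V}$ to define $U$: in the above chain of inequalities I have discarded the openness condition entirely, so the bound obtained is in fact an upper bound for the larger count without excising lines, and in particular dominates $N(B)$. No more delicate handling of the excised lines, and no finer bookkeeping of the primitivity condition, is required to reach the exponent $5/3 + \epsilon$.
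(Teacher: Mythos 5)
Your argument is correct and follows the same route as the paper: the corollary is an immediate consequence of the reduction $N(B) \le M(B)$ recorded in the introduction together with Theorem \ref{malthe}, and you have simply made that reduction precise (tracking the constant $c$ from the unimodular change of variables, the factor of at most $2$ from recovering $y_5 = \pm\sqrt{\psi_2({\bf y})}$, and the harmless discarding of primitivity and the excised lines), all of which the paper leaves implicit.
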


\begin{remark}
The best known results are due to P. Salberger \cite{Salberger}. He proves that $N(B) \ll_{\epsilon} B^{13/8 + \epsilon}$. However, our methods are are totally different from his methods. 
\end{remark}

As in \cite{H-R}, we consider the smoothed version of $M(B)$ and which is given by
\begin{equation} \label{countsol1}
M^{*}(B) = \sum_{\substack{{\bf x} \in \mathbb{Z}^4 \\ \psi_{1}({\bf x})=0}} W\left(B^{-1} {\bf x}\right) \, \theta \left(\psi_{2} \left({\bf x}\right)\right),
\end{equation}
where
\[ \theta(n) = 
\begin{cases} 
1 & \text{if} \, \,  n = \square \\
0 & \text{otherwise}.
\end{cases}
\] 
and $W: \mathbb{R}^4 \to \mathbb{R}$ be a non-negative smooth function supported on the cube $[-1,1]^4$. Clearly Theorem \ref{malthe} follows from the following proposition.
\begin{proposition} \label{malpropo}
We have 
$$M^{*}(B) \ll_{\epsilon} B^{\frac{5}{3}+\epsilon},$$
for any $\epsilon >0$.
\end{proposition}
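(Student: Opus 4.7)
The plan is to work within the Iwaniec--Munshi framework of \cite{H-R} -- Heath-Brown's square sieve to detect $\psi_{2}({\bf x}) = \square$, combined with the $\delta$-symbol to detect $\psi_{1}({\bf x}) = 0$ -- and to sharpen the character-sum estimates and the parameter balance in order to push the exponent from $9/5$ down to $5/3$. I would fix a parameter $P$ and a set $\mathcal{P}$ of primes in $[P, 2P]$ coprime to $2\mathcal{D}$, with $|\mathcal{P}| \asymp P/\log P$. Heath-Brown's square sieve then yields
$$M^{*}(B) \ll \frac{1}{|\mathcal{P}|^{2}} \sum_{p_{1}, p_{2} \in \mathcal{P}} \sum_{\substack{{\bf x} \in \mathbb{Z}^{4} \\ \psi_{1}({\bf x}) = 0}} W({\bf x}/B) \left(\frac{\psi_{2}({\bf x})}{p_{1} p_{2}}\right) + O\!\left(\frac{B^{4+\epsilon}}{P}\right),$$
and I would split the double sum into off-diagonal ($p_{1} \neq p_{2}$) and diagonal ($p_{1} = p_{2}$) pieces.

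Next, I would detect $\psi_{1}({\bf x}) = 0$ by the Duke--Friedlander--Iwaniec $\delta$-symbol at scale $Q \asymp B$, producing an average of additive characters $e(a \psi_{1}({\bf x})/q)$ over $q \leq Q$ and $a \in (\mathbb{Z}/q\mathbb{Z})^{*}$. Poisson summation in ${\bf x}$ modulo $qp_{1}p_{2}$ then converts the inner sum into a complete character sum modulo $qp_{1}p_{2}$, which factorizes by the Chinese Remainder Theorem as a Gauss sum for $\psi_{1}$ modulo $q$ times a Jacobi-twisted Gauss sum for $\psi_{2}$ modulo $p_{1}p_{2}$, weighted against a dual sum over ${\bf y} \in \mathbb{Z}^{4}$ of size $qp_{1}p_{2}/B$ per coordinate. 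Under the non-degeneracy hypothesis on the $2 \times 2$ minors and the non-square condition on $\alpha$, standard Weil bounds deliver square-root cancellation in the $p_{1}p_{2}$-aspect whenever $p_{1} \neq p_{2}$, and the $q$-aspect Gauss sum for the diagonal form $\psi_{1}$ behaves like a Salié-type sum of size $\asymp q^{2}$.

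Assembling the off-diagonal bound with the sieve error, one obtains an estimate of the shape
$$M^{*}(B) \ll B^{\epsilon}\left(\frac{B^{4}}{P} + T(P, B)\right),$$
where $T$ encodes the Poisson output; the target exponent $5/3$ arises when $T(P,B) \asymp B^{2}P^{2}$ and one chooses $P \asymp B^{2/3}$. \emph{The main obstacle} is the diagonal contribution $p_{1} = p_{2}$ of the square sieve: there $\left(\frac{\psi_{2}({\bf x})}{p^{2}}\right)$ collapses to the indicator of $p \nmid \psi_{2}({\bf x})$, which carries no character cancellation and, treated trivially, merely reproduces the Iwaniec--Munshi-style bound. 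This piece must be controlled by a separate Poisson argument whose saving comes entirely from the $q$-aspect Gauss sum for $\psi_{1}$, exploiting that the $2 \times 2$ minors of $(a_{i},b_{i})$ are all non-zero to guarantee genericity after reduction modulo $q$. Extracting the extra saving here, and then balancing it cleanly against the sieve error $B^{4+\epsilon}/P$, is where the improvement over \eqref{hrbound} is won and constitutes the technical heart of the proof.
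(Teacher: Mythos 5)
Your proposal misses the paper's central new idea and, as a result, the parameter balance you sketch does not produce the exponent $5/3$. Concretely: you propose detecting $\psi_1({\bf x})=0$ by the $\delta$-symbol ``at scale $Q\asymp B$,'' which is precisely the Iwaniec--Munshi setup and, on its own, only yields $B^{9/5+\epsilon}$. The improvement in the paper comes entirely from the \emph{congruence--equation trick}: since the sieve weight $\chi_q(\psi_2({\bf x}))$ with $q=p_1p_2$ is already present, one first restricts to $\psi_1({\bf x})\equiv 0\pmod q$ and then detects the \emph{integer} equation $\psi_1({\bf x})/q=0$ by the $\delta$-symbol at the reduced scale $Q=B/\sqrt{q}$. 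This shrinks the effective modulus $qc$ (one now sums over $c\ll B/\sqrt{q}$ rather than $c\ll B$), and it is exactly this conductor lowering that makes the off-diagonal terms $T_{p_1p_2}(B)$ small enough to allow a larger $P$. Without it, the circle-method analysis you outline just reproduces \eqref{hrbound}.

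Two further concrete problems. First, your sieve error is off: the sum being sieved runs over the quadric $\psi_1({\bf x})=0$, which carries $\ll B^{2+\epsilon}$ lattice points in $[-B,B]^4$, so the diagonal/error term is $B^{2+\epsilon}/P$, not $B^{4+\epsilon}/P$; balancing $B^4/P$ against $B^2P^2$ at $P\asymp B^{2/3}$ gives $B^{10/3}$, nowhere near $B^{5/3}$, so the arithmetic does not close even internally. Second, you identify the diagonal $p_1=p_2$ piece as ``the technical heart'' requiring a separate Poisson argument. In fact the paper bounds the diagonal completely trivially by $B^{2+\epsilon}/P$, and this \emph{is} the limiting term at the final choice $P=B^{1/3}$ (it equals $B^{5/3+\epsilon}$). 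What has to be improved is the off-diagonal, i.e.\ the single character sum $T_q(B)$ for squarefree $q=p_1p_2$; the paper proves $T_q(B)\ll(B^2/q^{3/2}+qB+B^{3/2}/q^{1/4})B^\epsilon$ via the lowered-conductor circle method, and only then is the choice $P=B^{1/3}$ (hence $q\asymp P^2=B^{2/3}$) admissible. So the genuine gap is the absence of the conductor-lowering step, compounded by a misattribution of where the saving must come from and by the wrong scale in the sieve error.
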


The rest of the paper devoted for the proof of Proposition \ref{malpropo}. We also use circle method and square sieve of Heath-Brown, as in \cite{H-R}, to obtain an upper bound on $M^{*}(B)$. First we majorize the function $\theta (n)$ by using  square sieve, in this process we end up with character sum over a variety. In this character sum we get the cancellations using the circle method.  Before applying the delta symbol to detect an equation we use the `congruence-equation trick' to lower the conductor in the circle method. This trick was first used by T. D Browning and R. Munshi in \cite{Tim-Munshi} while studying rational points on singular intersections of quadrics and later R. Munshi used this trick to obtain sub-convexity bounds for higher degree $L$-functions, see \cite{munshi3} for example. The use of this trick here lead us to improve the bound for $M(B)$ given in \eqref{hrbound}. We give details of this discussion in the following sections.

\section{Majorant for $\theta(n)$}
Let $\mathcal{P}$ be a set of $P$ primes of size $P \log P$. Then for each $n \in \mathbb{Z}$ with $\vert n \vert < \exp{P}$, we have
\begin{equation} \label{squaredet}
\theta(n) \ll \frac{1}{P^2} \, \bigg \lvert \sum_{p \in \mathcal{P}} \chi_{p} \left(n\right) \bigg \rvert^2,
\end{equation}
where $\chi_{p}$ is the quadratic residue character modulo $p$. The construction of such a majorant for $\theta(n)$ is due to D. R. Heath-Brown and he calls it as square sieve.

Let $P$ be a parameter such that $P \geq c \log B$ for some constant $c$ depending only on the form $\psi_{2}$.  Using the majorant for $\theta$, given in \eqref{squaredet}, for this choice of $P$,  we see that 
\begin{align*}
M^{*}(B) & \ll \sum_{\substack{{\bf x} \in \mathbb{Z}^4 \\ \psi_{1}({\bf x})=0}} W\left(B^{-1} {\bf x}\right) \frac{1}{P^2} \, \bigg \lvert \sum_{p \in \mathcal{P}} \chi_{p} \left(n\right) \bigg \rvert^2 \\
& \ll \frac{1}{P^2} \sum_{p_{1},p_{2} \in \mathcal{P}} \sum_{\substack{{\bf x} \in \mathbb{Z}^4 \\ \psi_{1}({\bf x})=0}} W\left(B^{-1} {\bf x}\right) \,  \chi_{p_{1} p_{2}} \left(\psi_{2}\left({\bf x}\right)\right).
\end{align*}
The diagonal , that is when $p_{1} = p_{2}$,  contribution we estimate trivially using the crude bound 
$$\sum_{\substack{{\bf x} \in \mathbb{Z}^4 \\ \psi_{1}({\bf x})=0}} W\left(B^{-1} {\bf x}\right) \ll_{\epsilon} B^{2 + \epsilon}.$$
Therefore we have
\begin{equation} \label{mstarbound}
M^{*}(B) \ll_{\epsilon} \frac{B^{2+\epsilon}}{P} + \frac{1}{P^{2}} \sum_{p_{1} \neq p_{2} \in \mathcal{P}} T_{p_{1}p_{2}}\left(B\right),
\end{equation}
where
$$T_{q} (B)= \sum_{\substack{{\bf x} \in \mathbb{Z}^4 \\ \psi_{1}({\bf x})=0}} W\left(B^{-1} {\bf x}\right) \chi_{q} \left(\psi_{2}\left({\bf x}\right)\right).$$
We will exploit oscillations of the quadratic character $\chi_{q}$ to show some cancellation in the sum $T_{q} (B)$. This will be done using the circle method. More precisely,  we will prove the following proposition.
\begin{proposition} \label{propontq}
Let $q$ be a square free number. Then we have
\begin{equation}
T_{q}(B) \ll \left( \frac{B^2}{q^{3/2}} + q B + \frac{B^{3/2}}{q^{1/4}} \right) B^{\epsilon}.
\end{equation}
where implied constant depends only on the forms $\psi_{1}, \psi_{2}$ and $\epsilon$.
\end{proposition}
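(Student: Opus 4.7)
The plan is to combine the Browning--Munshi congruence-equation trick with the Duke--Friedlander--Iwaniec delta symbol method and Poisson summation, extracting cancellation from the character $\chi_q$ while keeping the conductor in the delta method as small as possible.

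First I would exploit the $q$-periodicity of $\chi_q(\psi_2(\mathbf{x}))$ by decomposing over residue classes modulo $q$:
$$T_q(B) = \sum_{\mathbf{a} \pmod q} \chi_q(\psi_2(\mathbf{a})) \sum_{\substack{\mathbf{x} \equiv \mathbf{a} \pmod q \\ \psi_1(\mathbf{x})=0}} W(B^{-1}\mathbf{x}).$$
Writing $\mathbf{x} = \mathbf{a} + q\mathbf{y}$ and expanding the inner quadric yields
$$\frac{\psi_1(\mathbf{a})}{q} + B_1(\mathbf{a},\mathbf{y}) + q\,\psi_1(\mathbf{y}) = 0,$$
where $B_1$ is the bilinear form associated to $\psi_1$. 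Only residues with $q \mid \psi_1(\mathbf{a})$ contribute, and the left-hand side now has magnitude $O(B^2/q)$ instead of the naive $O(B^2)$; this is exactly the effect that the congruence-equation trick is designed to produce.

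Next, I would apply the delta symbol to detect the resulting equation in $\mathbf{y}$. Because the integer to be detected is of size $O(B^2/q)$, the natural choice of the DFI parameter is $Q \sim B q^{-1/2}$, which is strictly smaller than in a direct attack on $\psi_1(\mathbf{x})=0$. Poisson summation in $\mathbf{y}$ modulo $c$, for each delta-method modulus $c \le Q$, converts the $\mathbf{y}$-sum into a dual sum over frequencies $\mathbf{m} \in \mathbb{Z}^4$. Each term factors as a complete exponential sum modulo $c$ (involving $q\,\psi_1$ and a linear phase in $(\mathbf{a},\mathbf{m})$) times a four-dimensional oscillatory integral against $W$. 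The Chinese remainder theorem splits the complete sum along the coprime-to-$q$ and the $q$-parts of $c$; the coprime-to-$q$ piece is a Gauss-type sum attached to the nondegenerate quaternary form $\psi_1$ and enjoys full square-root cancellation.

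The main obstacle is the joint treatment of the $\mathbf{a}$-sum modulo $q$, which is constrained by $q \mid \psi_1(\mathbf{a})$ and twisted by $\chi_q(\psi_2(\mathbf{a}))$, against the frequency $\mathbf{m}$ and the $q$-part of $c$. After a further Poisson summation on $\mathbf{a}$ modulo $q$, this reduces to a character sum on $\mathbb{F}_q^4$ attached to the pair $(\psi_1, \psi_2)$; the hypotheses that all $2\times 2$ minors of the coefficient matrix are nonzero and that $\alpha = a_1a_2a_3a_4$ is not a square are precisely what guarantees the nondegeneracy required for Weil-type square-root cancellation in four variables. Collecting the contributions, three regimes appear: the zero- and small-frequency $\mathbf{m}$ together with degenerate pieces produce the $qB$ term; the generic-frequency regime where stationary phase is fully effective on the integral yields the $B^2/q^{3/2}$ term; and an intermediate range of moduli $c$, in which only partial phase cancellation is available, contributes the $B^{3/2}/q^{1/4}$ term, giving the stated estimate.
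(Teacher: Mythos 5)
The broad scaffolding here matches the paper---congruence-equation trick, delta symbol with reduced conductor $Q \asymp B q^{-1/2}$, Poisson summation, multiplicative decomposition of the resulting complete sums, Weil/Gauss-sum estimates---so the setup is essentially correct. But your account of where the three terms in the bound come from is wrong, and this is not a cosmetic issue: the correct decomposition hinges on a dichotomy you never mention. After Poisson summation the paper obtains
$$T_q(B) \approx \frac{B^2}{q^3}\sum_{\mathbf{w}\in\mathbb{Z}^4}\sum_{c}\frac{1}{c^4}\,S_{q,c}(\mathbf{w})\,I_{q,c}(\mathbf{w}),$$
and the crucial split is by the value of the \emph{dual} quadratic form $\tilde{\psi}_1(\mathbf{w})$. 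The term $B^2/q^{3/2}$ is the contribution of the single zero frequency $\mathbf{w}=\mathbf{0}$ (so it has nothing to do with stationary phase being ``fully effective'' at generic frequencies, as you assert); the term $qB$ comes from the generic non-zero frequencies with $\tilde{\psi}_1(\mathbf{w})\neq 0$ (not from ``zero- and small-frequency plus degenerate pieces''); and the term $B^{3/2}/q^{1/4}$ comes from the sparse but problematic set of non-zero $\mathbf{w}$ with $\tilde{\psi}_1(\mathbf{w})=0$ (not from an ``intermediate range of moduli $c$'').

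This matters because the case $\tilde{\psi}_1(\mathbf{w})=0$ requires an argument of a completely different kind than the rest: a bound on the absolute values of $S_{q,c}(\mathbf{w})$ summed over $c$ is not enough there, and one must instead exhibit cancellation in the $c$-sum. The paper does this by forming the Dirichlet series $\sum_{(c,q)=q_1} S_{q_1,c}(\mathbf{w})\,c^{-s}$, factoring out an $L$-function $L(s-3,\chi)$, and applying Perron's formula; the hypothesis that $\alpha = a_1a_2a_3a_4$ is not a square is invoked precisely to ensure $L(s-3,\chi)$ has no pole at $s=4$. Your proposal instead locates the role of the non-square and nonzero-minors conditions in ``Weil-type square-root cancellation in four variables,'' which misidentifies both the step where these hypotheses enter and the nature of the cancellation being used (in the paper the Weil bound is applied only to one-variable character sums after the Gauss sums are evaluated explicitly). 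Without the $\tilde{\psi}_1(\mathbf{w})=0$ dichotomy and the Perron/Dirichlet-series argument, the proof would not close: summing absolute values over all $c$ in that degenerate range would give a bound that is far too weak.
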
  
We give proof of this proposition in Section \ref{tqbound}. Finally using the above bound on $T_{q}(B)$ in \eqref{mstarbound},  we get a bound on $M^{\star}(B)$. By choosing $P$ optimally,  we get a proof of Proposition \ref{malpropo}, which we discuss in the final Section \ref{final}.

\section{The $\delta$-symbol} 
The delta symbol $\delta : \mathbb{Z} \to \mathbb{R}$ is given by
\[ \delta(n) = 
\begin{cases} 
1 & \text{if} \, \,  n = 0 \\
0 & \text{otherwise}.
\end{cases}
\] 
We have a following analytic expression for this symbol which is due to D. R. Heath-Brown \cite[Theorem 1]{Heath}.
\begin{lemma}
For any $Q >1$, there is a positive constant $c_{Q}$, and an infinitely differentiable function $h(x,y)$ defined on the set $(0, \infty) \times \mathbb{R}$, such that
\begin{equation} \label{delatsym}
\delta(n) = \frac{c_{Q}}{Q^2} \, \sum_{c=1}^{\infty} \sideset{}{^\star}{\sum}_{ a \, \rm mod \, c} \, e\left(\frac{a \, n}{c} \right) \, h \left(\frac{c}{Q}, \frac{n}{Q^2}\right).
\end{equation}
The constant $c_{Q}$ satisfies 
$$c_{Q} = 1 + O_{N} \left(Q^{-N}\right),$$
for any $N>0$. Moreover $h(x,y) \ll x^{-1}$ for all $y$, and $h(x,y)$ is non-zero only for $x \leq \max \left(1, 2\vert y \vert\right)$.  
\end{lemma}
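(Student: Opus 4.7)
The plan is to construct $h$ explicitly from a single auxiliary smooth bump function and then verify the identity by exploiting a symmetry of the divisors of $|n|$. Fix a non-negative smooth function $w:(0,\infty)\to\mathbb{R}$, supported in $[1/2,1]$, with $\int_0^{\infty} w(x)\,dx=1$. Define
\begin{equation*}
h(x,y)=\sum_{j=1}^{\infty}\frac{1}{xj}\bigl(w(xj)-w(|y|/(xj))\bigr).
\end{equation*}
Both terms inside the sum are locally finite: $w(xj)\neq 0$ forces $xj\in[1/2,1]$, giving $O(1/x)$ admissible $j$, each contributing $O(1)$; similarly $w(|y|/(xj))\neq 0$ forces $xj\in[|y|,2|y|]$, giving $O(|y|/x)$ admissible $j$, each of size $O(1/|y|)$. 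Both estimates yield $h(x,y)\ll 1/x$, and the support condition $x\leq \max(1,2|y|)$ is immediate because no $j\geq 1$ can realise either range otherwise. Smoothness in $(0,\infty)\times\mathbb{R}$ follows because on any compact set only finitely many $j$ contribute and each summand is smooth.

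Next I would show that with this $h$ the right-hand side of \eqref{delatsym} vanishes for $n\neq 0$ and is $\approx 1$ for $n=0$, which then determines $c_Q$. Substituting $x=c/Q$ and regrouping via $m=cj$, the right side of \eqref{delatsym} becomes
\begin{equation*}
\frac{c_Q}{Q}\sum_{m=1}^{\infty}\frac{1}{m}\Bigl(w(m/Q)-w(|n|/(mQ))\Bigr)\sum_{c\mid m} c_c(n),
\end{equation*}
where $c_c(n)=\sideset{}{^{\star}}\sum_{a\bmod c} e(an/c)$ is the Ramanujan sum. Using the classical identity $\sum_{c\mid m} c_c(n)=m\cdot[m\mid n]$, this collapses to
\begin{equation*}
\frac{c_Q}{Q}\sum_{m\mid |n|}\bigl(w(m/Q)-w(|n|/(mQ))\bigr)\qquad (n\neq 0),
\end{equation*}
which is zero by the involution $m\mapsto |n|/m$ on the set of positive divisors of $|n|$. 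This is precisely the trick that makes the identity work: the entire arithmetic content for $n\neq 0$ is absorbed into a symmetric sum over divisor pairs.

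For $n=0$ the second term in $h$ vanishes (since $w$ is supported away from $0$), and the identity above reduces to
\begin{equation*}
\delta(0)=\frac{c_Q}{Q}\sum_{m=1}^{\infty}w(m/Q).
\end{equation*}
Since $w$ is smooth and compactly supported, Poisson summation (or the Euler–Maclaurin formula applied to all derivatives) gives $\sum_m w(m/Q)=Q\int_0^\infty w(x)\,dx+O_N(Q^{-N})=Q+O_N(Q^{-N})$ for every $N>0$. Forcing the left side to equal $1$ determines $c_Q=\bigl(Q^{-1}\sum_m w(m/Q)\bigr)^{-1}=1+O_N(Q^{-N})$. The remaining qualitative claims (positivity of $c_Q$ for $Q$ large, smoothness of $h$) are immediate from the construction.

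The only genuinely delicate point is the bookkeeping for the bound $h(x,y)\ll x^{-1}$ uniformly in $y$, since the second sum ostensibly ranges over more terms when $|y|$ is large; the compensating $1/(xj)$ weight is what keeps the bound uniform. Everything else — the divisor involution and the Poisson evaluation of $c_Q$ — is standard, so the proof really comes down to choosing $h$ of the right shape and noticing that the Ramanujan-sum identity turns the defining sum into a sum over divisors of $|n|$ symmetric under $m\mapsto|n|/m$.
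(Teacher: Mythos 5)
The paper does not prove this lemma at all: it is quoted verbatim from Heath-Brown \cite[Theorem 1]{Heath}, whose argument in turn is the Duke--Friedlander--Iwaniec $\delta$-symbol construction. Your proof is a correct reconstruction of exactly that argument: defining $h(x,y)=\sum_{j}(xj)^{-1}\bigl(w(xj)-w(|y|/(xj))\bigr)$, collapsing the double sum over $c$ and $j$ via $m=cj$ together with the identity $\sum_{c\mid m}c_c(n)=m\cdot[m\mid n]$, killing the $n\neq 0$ contribution by the involution $m\mapsto |n|/m$ on divisors, and fixing $c_Q$ by Poisson summation. All the steps are sound: for fixed $n$ and $Q$ only finitely many pairs $(c,j)$ contribute, so the rearrangement is legitimate; and your bookkeeping for $h(x,y)\ll x^{-1}$ is right, the key observation being that any nonzero term of the second sum forces $x\le 2|y|$, so the $O(1/|y|)$ term sizes are in fact $O(1/x)$.

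The one loose end is the claim that $c_Q$ is a \emph{positive} constant for every $Q>1$, not merely for $Q$ large. With $w$ smooth and supported in $[1/2,1]$ one necessarily has $w(1/2)=w(1)=0$, so at $Q=2$ the normalising sum $\sum_m w(m/Q)$ runs over $m\in\{1,2\}$ only and vanishes, leaving $c_2$ undefined; nearby values of $Q$ are also delicate unless $w>0$ on all of $(1/2,1)$. You flag positivity only ``for $Q$ large,'' which is all the present paper needs (here $Q=B/\sqrt{q}\to\infty$), but to get the lemma exactly as stated one must adjust the choice or scaling of $w$ for bounded $Q$. This is a removable technicality, not a flaw in the method.
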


Also D. R.  Heath-Brown thoroughly studied the integrals  involving the function $h(x,y)$ , see sections 7 and 8 in \cite{Heath}. We also need these integral estimates in  our analysis, so we simply refer to results of these sections whenever we use them.

\section{Estimate of  $T_{q}(B)$: application of Poisson summation}
We recall that
$$T_{q}(B) = \sum_{\substack{{\bf x} \in \mathbb{Z}^4 \\ \psi_{1}({\bf x})=0}} W\left(B^{-1} {\bf x}\right) \chi_{q} \left(\psi_{2}\left({\bf x}\right)\right).$$
We now estimate $T_{q}(B)$ using circle method. First we pick the condition $\psi_{1}({\bf x}) =0$ in the definition of $T_{q}(B)$ using the delta symbol. Before writing $\delta$ symbol we use congruence equation trick to lower the ``conductor". Indeed,
\begin{align} \label{tqvalue1}
T_{q}(B) = \sum_{\substack{{\bf x} \in \mathbb{Z}^4 \\ \psi_{1}({\bf x}) \, \equiv \, 0 \, \rm mod \, q}} W\left(B^{-1} {\bf x}\right) \chi_{q} \left(\psi_{2}\left({\bf x}\right)\right) \, \delta \left(\frac{\psi_{1}\left({\bf x}\right)}{q}\right).
\end{align}
Let $Q= B / \sqrt{q}$. Using the expression \eqref{delatsym} for $\delta(n)$, we get
\begin{equation} \label{afterdelata}
T_{q}(B)= \frac{c_{Q}}{Q^2}\sum_{c=1}^{\infty} \, \sideset{}{^\star}{\sum}_{a \, \rm mod \, c} \, \sum_{\substack{{\bf x} \in \mathbb{Z}^4 \\ \psi_{1}({\bf x}) \, \equiv \, 0 \, \rm mod \, q}} W\left(B^{-1} {\bf x}\right) \chi_{q} \left(\psi_{2}\left({\bf x}\right)\right) \, e \left(\frac{a \psi_{1}({\bf x}) }{qc}\right) \, h\left(\frac{c}{Q}, \frac{\psi_{1}(\bf x)}{q Q^2}\right).
\end{equation}

By an application of Poisson summation formula we get the following lemma .
\begin{lemma}
 For any $A>0$, We have
$$T_{q}(B) = \frac{B}{q^3}\sum_{{\bf w} \in \mathbb{Z}^4} \sum_{c=1}^{\infty} \frac{1}{c^4} \, S_{q,c}\left({\bf w}\right) \, I_{q,c}\left({\bf w}\right) + O_{A}\left(B^{-A}\right),$$
where
$$S_{q,c} \left({\bf w}\right) =  \sideset{}{^\star}{\sum}_{a \, \rm mod \, c}  \, \sum_{\substack{{\bf k} \,  \rm mod \, qc \\ \psi_{1}({\bf k}) \equiv \, 0 \,  \rm mod \, q}} \chi_{q} \left(\psi_{2} \left({\bf k}\right) \right) \, e \left(\frac{a \psi_{1}({\bf k}) + {\bf w}.{\bf k}}{qc}\right),$$
and 
$$I_{q,c}\left({\bf w}\right)= \int_{\mathbb{R}^4} W\left( {\bf y}\right) \, h\left( \frac{c}{Q}, \psi_{1}({\bf y})\right) \, e \left(-\frac{B \,  {\bf w}.{\bf y}}{qc}\right) \, \mathrm{d} {\bf y}.$$
\end{lemma}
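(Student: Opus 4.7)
The plan is to pick up the expression for $T_q(B)$ in \eqref{afterdelata} and convert the sum over $\mathbf{x} \in \mathbb{Z}^4$ into a Fourier expansion over a dual lattice. First I would decompose $\mathbf{x} = \mathbf{k} + qc\,\mathbf{m}$ with $\mathbf{k}$ running over residues modulo $qc$ and $\mathbf{m}$ running over $\mathbb{Z}^4$. Since $\psi_1$ and $\psi_2$ are integral quadratic forms, one has $\psi_i(\mathbf{k}+qc\mathbf{m}) \equiv \psi_i(\mathbf{k}) \pmod{qc}$, so the congruence $\psi_1(\mathbf{x}) \equiv 0 \pmod q$, the character $\chi_q(\psi_2(\mathbf{x}))$ and the exponential $e(a\psi_1(\mathbf{x})/(qc))$ all depend only on the class of $\mathbf{k}$ modulo $qc$. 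What remains varying with $\mathbf{m}$ is the smooth weight
\[
F_{\mathbf{k}}(\mathbf{m}) \;=\; W\!\left(\frac{\mathbf{k}+qc\mathbf{m}}{B}\right)\,h\!\left(\frac{c}{Q},\,\frac{\psi_1(\mathbf{k}+qc\mathbf{m})}{qQ^2}\right),
\]
and I would apply Poisson summation to $\sum_{\mathbf{m} \in \mathbb{Z}^4} F_{\mathbf{k}}(\mathbf{m})$.

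Computing the Fourier transform is then a change of variables: setting $\mathbf{y} = B^{-1}(\mathbf{k}+qc\mathbf{u})$ produces the Jacobian $(B/(qc))^4$, while the defining choice $Q = B/\sqrt{q}$ gives $qQ^2 = B^2$, so that $\psi_1(\mathbf{k}+qc\mathbf{u})/(qQ^2)$ collapses cleanly to $\psi_1(\mathbf{y})$. The linear phase splits as
\[
e(-\mathbf{w}\cdot\mathbf{u}) \;=\; e\!\left(\frac{\mathbf{w}\cdot\mathbf{k}}{qc}\right)\,e\!\left(-\frac{B\,\mathbf{w}\cdot\mathbf{y}}{qc}\right),
\]
and the second factor, together with $W(\mathbf{y})h(c/Q,\psi_1(\mathbf{y}))$ under the integral, assembles into precisely $I_{q,c}(\mathbf{w})$. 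The first factor combines with $e(a\psi_1(\mathbf{k})/(qc))$, and then summing $a$ modulo $c$ (with $(a,c)=1$) and $\mathbf{k}$ modulo $qc$ subject to $\psi_1(\mathbf{k}) \equiv 0 \pmod q$ gives exactly the character sum $S_{q,c}(\mathbf{w})$ of the statement. Collecting the outer factor $c_Q/Q^2$ with the Jacobian $(B/(qc))^4$ and using $Q^2 = B^2/q$ yields the announced prefactor, with the harmless correction $c_Q = 1 + O_N(Q^{-N})$ absorbed into the $O_A(B^{-A})$ remainder.

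The remaining point is to justify the convergence of the double sum and to produce the error term $O_A(B^{-A})$. The $c$-series is effectively finite: the support condition $h(x,y) = 0$ unless $x \leq \max(1, 2|y|)$, combined with $\psi_1(\mathbf{y}) = O(1)$ on the support of $W$, forces $c \ll Q = B/\sqrt{q}$. For the dual variable $\mathbf{w}$, I would truncate by repeated integration by parts in $I_{q,c}(\mathbf{w})$: using the derivative bounds on $W$ together with the estimates on the partial derivatives of $h(x,y)$ developed in Sections~7--8 of \cite{Heath}, each integration by parts saves a factor of order $qc/(B\,|\mathbf{w}|)$. Iterating the procedure sufficiently many times allows the tail $|\mathbf{w}| \gg B^{\epsilon}\,qc/B$ to be made smaller than any negative power of $B$, thereby producing the error term. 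The delicate bookkeeping here, matching the support of $h$ and the scales $q$, $c$, $Q$ against the derivative growth, is the main technical obstacle; once this is in place the rest of the argument is formal.
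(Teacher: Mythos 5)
Your proposal matches the paper's proof: decompose $\mathbf{x}=\mathbf{k}+qc\,\mathbf{v}$ into residue classes modulo $qc$, apply Poisson summation in $\mathbf{v}$, and change variables to identify $S_{q,c}(\mathbf{w})$ and $I_{q,c}(\mathbf{w})$, with the $O_A(B^{-A})$ coming from $c_Q=1+O_A(Q^{-A})$. One small clarification: the error term in the lemma arises solely from replacing $c_Q$ by $1$; the rapid decay of $I_{q,c}(\mathbf{w})$ that you invoke via integration by parts is what guarantees absolute convergence of the $\mathbf{w}$-sum (and will be used later to truncate it), but it is not itself a source of the $O_A(B^{-A})$ remainder here.
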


\begin{proof}
 
We now consider the inner sum over ${\bf x}$ in \eqref{afterdelata}. We split this sum into according to their residue classes modulo $qc$. Indeed, 
\begin{equation} \label{arangementmod}
\sum_{\substack{{\bf k} \,  \rm mod \, qc \\ \psi_{1}({\bf k}) \, \equiv \, 0 \, \rm mod \, q}} \chi_{q} \left(\psi_{2} \left({\bf k}\right) \right) \, e \left(\frac{a \psi_{1}({\bf k}) }{qc}\right) \, \sum_{{\bf v} \in \mathbb{Z}^4} f({\bf v}),
\end{equation}
where
$$f({\bf v}) = W\left(B^{-1} \left({\bf k} + qc {\bf v}\right)\right) \, h\left(\frac{c}{Q}, \frac{\psi_{1}\left({\bf k} +qc {\bf v}\right)}{B^2}\right).$$

We now apply the Poisson summation formula to the sum over ${\bf v}$ in \eqref{arangementmod}, then we get 
\begin{equation} \label{poisson}
\sum_{{\bf v} \in \mathbb{Z}^4} f({\bf v}) = \sum_{{\bf w} \in \mathbb{Z}^4} \widehat {f} ({\bf w}),
\end{equation}
where
$$\widehat {f} ({\bf w}) =  \int_{\mathbb{R}^4}\, f({\bf y}) \, e\left(-{\bf w}. {\bf y} \right)\mathrm{d} {\bf y}.$$

By the change of variable ${\bf z} = B^{-1} \left({\bf k} + qc \, {\bf y} \right) $, we see that 
\begin{equation} \label{fouriervalue}
\widehat {f} ({\bf w}) = \frac{B^4 }{q^4 c^4} \, e\left(\frac{{\bf k}.{\bf w}}{qc}\right) \, \int_{\mathbb{R}^4} W\left( {\bf z}\right) \, h\left( \frac{c}{Q}, \psi_{1}({\bf z})\right) \, e \left(-\frac{B \,  {\bf w}.{\bf z}}{qc}\right) \, \mathrm{d} {\bf z}.
\end{equation}

Hence,the statement of the lemma follows from  \eqref{fouriervalue}, \eqref{poisson}, \eqref{arangementmod} and \eqref{afterdelata} by noting the value $c_{Q} = 1 + O_{A}(Q^{-A})$ for any $A>0$. 
\end{proof}

\section{The Character sum}
In this section we obtain estimates on the character sum 
\begin{equation} \label{charsum}
S_{q,c} \left({\bf w}\right) =  \sideset{}{^\star}{\sum}_{a \, \rm mod \, c}  \, \sum_{\substack{{\bf k} \,  \rm mod \, qc \\ \psi_{1}({\bf k}) \equiv \, 0 \,  \rm mod \, q}} \chi_{q} \left(\psi_{2} \left({\bf k}\right) \right) \, e \left(\frac{a \psi_{1}({\bf k}) + {\bf w}.{\bf k}}{qc}\right).
\end{equation}
We have the following lemma, which shows multiplicative property of the character sum. 
\begin{lemma} \label{multi}
For $q=q_{1}q_{2}, c= c_{1}c_{2}$ with gcd $(q_{1}c_{1},q_{2}c_{2})=1$, we have
$$S_{q,c}({\bf w}) = S_{q_{1},c_{1}}({\bf w}) \, S_{q_{2},c_{2}}({\bf w}).$$
\end{lemma}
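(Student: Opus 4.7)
The plan is to apply the Chinese Remainder Theorem twice: once to split the $a$-variable modulo $c=c_1c_2$ (possible since $\gcd(c_1,c_2)=1$) and once to split the ${\bf k}$-variable modulo $qc=(q_1c_1)(q_2c_2)$ (possible by the hypothesis $\gcd(q_1c_1,q_2c_2)=1$). The goal is to exhibit a bijective correspondence between the summation variables in $S_{q,c}({\bf w})$ and pairs of variables indexing $S_{q_1,c_1}({\bf w})$ and $S_{q_2,c_2}({\bf w})$, and then to check that every piece of the summand---the divisibility condition $q\mid\psi_1({\bf k})$, the character $\chi_q(\psi_2({\bf k}))$, and the additive phase---factorises compatibly.

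Concretely, I would write ${\bf k}\equiv\alpha\,{\bf k}^{(1)}+\beta\,{\bf k}^{(2)}\pmod{qc}$ with $\alpha=(q_2c_2)\,u$ and $\beta=(q_1c_1)\,v$, where $u$ is the inverse of $q_2c_2$ modulo $q_1c_1$ and $v$ the inverse of $q_1c_1$ modulo $q_2c_2$; then $\alpha\equiv 1\pmod{q_1c_1}$, $\alpha\equiv 0\pmod{q_2c_2}$, and vice versa for $\beta$. The relations $\alpha\beta\equiv 0$, $\alpha^2\equiv\alpha$, $\beta^2\equiv\beta\pmod{qc}$ imply $\psi_j({\bf k})\equiv\alpha\,\psi_j({\bf k}^{(1)})+\beta\,\psi_j({\bf k}^{(2)})\pmod{qc}$ for $j=1,2$. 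Reducing modulo $q_i$ then decouples the condition $q\mid\psi_1({\bf k})$ into $q_i\mid\psi_1({\bf k}^{(i)})$ and factorises the character as $\chi_q(\psi_2({\bf k}))=\chi_{q_1}(\psi_2({\bf k}^{(1)}))\,\chi_{q_2}(\psi_2({\bf k}^{(2)}))$.

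The phase requires more care. Using the CRT identity $\frac{1}{qc}\equiv\frac{u}{q_1c_1}+\frac{v}{q_2c_2}\pmod 1$, the full phase splits into an $i=1$ contribution $e(u(a\,\psi_1({\bf k}^{(1)})+{\bf w}\cdot{\bf k}^{(1)})/(q_1c_1))$ and the symmetric $i=2$ contribution. To remove the residual unit $u$ I would substitute ${\bf k}^{(1)}\mapsto (q_2c_2)\,{\bf k}^{(1)}\pmod{q_1c_1}$, a bijection of $(\mathbb{Z}/q_1c_1\mathbb{Z})^4$ since $\gcd(q_2c_2,q_1c_1)=1$. Under this substitution the character and the divisibility condition are invariant (the squared unit is invisible to $\chi_{q_1}$ and coprime to $q_1$), the linear term absorbs $u$, and the coefficient of $\psi_1({\bf k}^{(1)})$ rescales from $ua$ to $(q_2c_2)\,a$ modulo $q_1c_1$---a coefficient that, because of the condition $q_1\mid\psi_1({\bf k}^{(1)})$, only matters modulo $c_1$. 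Setting $a^{(1)}=(q_2c_2)\,a\bmod c_1$ and analogously $a^{(2)}=(q_1c_1)\,a\bmod c_2$ gives a bijection from units mod $c$ onto $(\mathbb{Z}/c_1\mathbb{Z})^*\times(\mathbb{Z}/c_2\mathbb{Z})^*$ (by CRT and since $q_2c_2$ is a unit mod $c_1$, $q_1c_1$ a unit mod $c_2$), and the summand reassembles as exactly the product of the summands of $S_{q_1,c_1}({\bf w})$ and $S_{q_2,c_2}({\bf w})$.

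The main obstacle is precisely this bookkeeping around the phase: because $a$ runs modulo $c$ while the denominator in the exponential is $qc$, the naive CRT decomposition of the Gauss-type sum does not separate on the nose, and one must interleave the change of variables in the ${\bf k}^{(i)}$ with a matched reparametrisation $a\leftrightarrow(a^{(1)},a^{(2)})$ to cancel the residual CRT units. Once the correct substitutions are lined up, each ingredient reads off directly from the definitions of $S_{q_1,c_1}$ and $S_{q_2,c_2}$.
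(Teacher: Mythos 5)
Your proof is correct and follows the same route as the paper: the paper simply writes $a$ and $\mathbf{k}$ via the CRT idempotents $(q_2c_2)\overline{(q_2c_2)}$ and $(q_1c_1)\overline{(q_1c_1)}$ and says ``rearranging the sum,'' which is exactly the bookkeeping you carry out. Your extra step of absorbing the residual CRT units by the compensating substitutions $\mathbf{k}^{(i)}\mapsto(q_jc_j)\mathbf{k}^{(i)}$ and $a\mapsto(a^{(1)},a^{(2)})$ is the detail the paper leaves implicit, and you justify it correctly (the squared unit disappears in $\chi_{q_i}$, the divisibility $q_i\mid\psi_1(\mathbf{k}^{(i)})$ makes the quadratic coefficient matter only mod $c_i$, and the map to $(\mathbb{Z}/c_1\mathbb{Z})^*\times(\mathbb{Z}/c_2\mathbb{Z})^*$ is a bijection).
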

\begin{proof}
The lemma follows by writing 
$$a = a_{1} \left(q_{2}c_{2}\right) \overline{\left(q_{2}c_{2}\right)} + a_{2} \left(q_{1}c_{1}\right) \overline{\left(q_{1}c_{1}\right)} \quad \, \text{with} \quad \, a_{1} \rm \, mod \, c_{1}, \,  a_{2} \, \rm mod \, c_{2}$$
and
$${\bf k} = {\bf k}_{1} \left(q_{2}c_{2}\right) \overline{\left(q_{2}c_{2}\right)} + {\bf k}_{2} \left(q_{1}c_{1}\right) \overline{\left(q_{1}c_{1}\right)} \quad \, \text{with} \quad \, {\bf k}_{1} \rm \, mod \, q_{1} c_{1}, \, {\bf k}_{2} \, \rm mod \, q_{2} c_{2}$$
in \eqref{charsum}
and then rearranging the sum. 
\end{proof}

In the light of above Lemma \ref{multi}, we only need to study character sums of the forms
$$S_{p,1}\left({\bf w}\right), S_{p,p^r}\left({\bf w}\right) \,  \text{and} \, S_{1,p^r}\left({\bf w}\right)$$
as we are interested in for square free $q$. In the remaining section we estimate these character sums.

We have a basic result about quadratic character sum  below. Before stating  we keep some notations: let $\phi({\bf x}) = \sum_{i=1}^{4} \gamma_{i} x_{i}^2$ be the quadratic form in four variables. Let $\gamma= \prod_{i=1}^{4} \gamma_{i}$, and define associated quadratic form
$$\tilde{\phi}({\bf x}) = \sum_{i=1}^{4} \frac{\gamma \, x_{i}^2}{\gamma_{i}}.$$
We have the following result.
\begin{lemma} \label{quadaricsum}
Let $p$ be a prime with $p \nmid 2 \prod_{i=1}^{4} \gamma_{i}$. For any $m$ with $(p,m)=1$, we have
\[ 
\sum_{{\bf k} \, \rm mod \, p^r} e_{p^r} \left(m \phi({\bf k}) + {\bf k}.{\bf w} \right)=
\begin{cases} 
p^{2r} e_{p^r} \left(- \overline{4 m \gamma} \, \tilde{\phi}({\bf w})\right) & \text{if} \, \, r \, \text{is even} \\
p^{2r}  \chi_{p}(\gamma) \, e_{p^r} \left(- \overline{4 m \gamma} \, \tilde{\phi}({\bf w})\right) & \text{if} \, \, r \, \text{is odd} .
\end{cases}
\]

\end{lemma}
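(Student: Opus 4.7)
The plan is to exploit the fact that $\phi$ is diagonal, so that the exponential sum factors as a product of one-variable Gauss sums, each of which can be evaluated in closed form by completing the square.

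First I would write
\begin{equation*}
\sum_{{\bf k}\,\mathrm{mod}\,p^r} e_{p^r}\!\bigl(m\phi({\bf k})+{\bf k}\cdot{\bf w}\bigr)
=\prod_{i=1}^{4}\sum_{k_i\,\mathrm{mod}\,p^r} e_{p^r}\!\bigl(m\gamma_i k_i^{2}+w_i k_i\bigr).
\end{equation*}
Since $p\nmid 2m\gamma_i$, the element $2m\gamma_i$ is invertible modulo $p^r$, so I can complete the square to obtain
\begin{equation*}
m\gamma_i k_i^{2}+w_i k_i\equiv m\gamma_i\bigl(k_i+\overline{2m\gamma_i}\,w_i\bigr)^{2}-\overline{4m\gamma_i}\,w_i^{2}\pmod{p^r}.
\end{equation*}
Shifting $k_i$ accordingly, the $i$-th factor becomes $e_{p^r}\bigl(-\overline{4m\gamma_i}\,w_i^{2}\bigr)\,G(m\gamma_i;p^r)$, where $G(a;p^r)=\sum_{k\,\mathrm{mod}\,p^r}e_{p^r}(ak^{2})$ is the classical quadratic Gauss sum.

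Next I would recall the evaluation of $G(a;p^r)$ for odd $p$ and $(a,p)=1$: using the standard recursion $G(a;p^r)=p\,G(a;p^{r-2})$ together with $G(a;p)=\chi_p(a)\varepsilon_p\sqrt{p}$ (with $\varepsilon_p^{4}=1$), one gets $G(a;p^{2s})=p^{s}$ and $G(a;p^{2s+1})=\chi_p(a)\varepsilon_p p^{s+1/2}$. Taking the product over $i=1,\dots,4$ and using $\chi_p(m^{4})=1$ and $\varepsilon_p^{4}=1$ gives a total multiplicative factor of $p^{2r}$ if $r$ is even and $\chi_p(\gamma)p^{2r}$ if $r$ is odd, which already matches the prefactors stated in the lemma.

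Finally, for the phase I would collect the completed-square contributions:
\begin{equation*}
\sum_{i=1}^{4}\bigl(-\overline{4m\gamma_i}\,w_i^{2}\bigr)\equiv -\,\overline{4m\gamma}\sum_{i=1}^{4}\frac{\gamma\,w_i^{2}}{\gamma_i}\;=\;-\,\overline{4m\gamma}\,\tilde\phi({\bf w})\pmod{p^r},
\end{equation*}
after multiplying numerator and denominator by $\gamma/\gamma_i$ and using that $p\nmid\gamma$ so all relevant inverses exist. Substituting this into the product completes the proof.

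There is no real obstacle here, only careful bookkeeping: the two subtleties are verifying that $\varepsilon_p^{4}=1$ so that only $\chi_p(\gamma)$ (not some fourth power of $\varepsilon_p$ times it) survives in the odd case, and correctly converting the sum of inverses $\sum_i \overline{4m\gamma_i}\,w_i^{2}$ into the single inverse $\overline{4m\gamma}\,\tilde\phi({\bf w})$. Both follow from the hypothesis $p\nmid 2\prod_i\gamma_i$.
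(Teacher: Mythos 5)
Your proof is correct and follows essentially the same route as the paper: factor the sum by coordinates, complete the square in each one-variable Gauss sum, evaluate the Gauss sum $G(m\gamma_i;p^r)$ in the even/odd $r$ cases, and observe that $\chi_p(m^4)=\varepsilon_p^4=1$ and $\overline{4m\gamma_i}=\overline{4m\gamma}\,(\gamma/\gamma_i)$ to assemble the stated prefactor and phase. The paper's proof is a terser version of the same computation.
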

\begin{proof}
Let $T$ denote the sum appearing on the left-hand side of the above formula. Then we have
\begin{equation} \label{gausesum1}
T = \prod_{i=1}^{4} \left( \sum_{k \, \rm mod \, p^r} e_{p^r} \left(m \gamma_{i} k +  w_{i} k \right) \right).
\end{equation}
We can evaluate the sum inside the bracket by completing the square. Indeed, we have
$$ \sum_{k \, \rm mod \, p^r} e_{p^r} \left(m \gamma_{i} k^2 +  w_{i} k \right) = e_{p^r} \left(- \overline{4 m \gamma_{i}} w_{i}^2\right) \, \sum_{k \, \rm mod \, p^r} e_{p^r} \left(m \gamma_{i} k^2 \right)$$
as $p \nmid 2 m \prod_{i=1}^{4} \gamma_{i}$. And  we can evaluate the above  quadratic Gauss sum as follows

\[ 
\sum_{k \, \rm mod \, p^r} e_{p^r} \left(m \gamma_{i} k^2 \right)=
\begin{cases} 
p^{r/2} & \text{if} \, \, r \, \text{is even} \\
p^{r/2}  \chi_{p}(m \gamma_{i}) \, \varepsilon(p) & \text{if} \, \, r \, \text{is odd},
\end{cases}
\]
where $\varepsilon(p) =1 $ if $p \equiv \, 1 \, \rm mod \, 4$, and $\varepsilon(p) = i$ if  $p \equiv \, 3 \, \rm mod \, 4$.
On substituting this value of quadratic Gauss sum in \eqref{gausesum1}, we get the assertion of the lemma.
\end{proof}

In following subsections we estimate character sums  $S_{p,1}\left({\bf w}\right), S_{p,p^r}\left({\bf w}\right) \,  \text{and} \, S_{1,p^r}\left({\bf w}\right)$.

\subsection{Estimates of $S_{p,1}$} 
In the case when $q=p$ and  $c=1$, the character is given by
\begin{equation} \label{case1}
S_{p,1}({\bf w}) = \sum_{\substack{{\bf k} \,  \rm mod \, p \\ \psi_{1}({\bf k}) \equiv \, 0 \,  \rm mod \, p}} \chi_{p} \left(\psi_{2} \left({\bf k}\right) \right) \, e \left(\frac{{\bf w}.{\bf k}}{p}\right).
\end{equation}

We have the following lemma.
\begin{lemma} \label{sp1bound}
For any prime $p$ with $p \nmid 2 \alpha \mathcal{D}$, we have 
$$S_{p,1} \ll p^{\frac{3}{2}},$$
where implied constant depends only on the forms $\psi_{1}$ and $\psi_{2}$.
\end{lemma}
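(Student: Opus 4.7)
\medskip

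\noindent\textbf{Plan of proof.} The strategy is to reduce $S_{p,1}({\bf w})$ to a one--dimensional character sum of Weil type. First, I would detect the congruence $\psi_{1}({\bf k})\equiv 0\pmod p$ by additive characters,
$$
S_{p,1}({\bf w})=\frac{1}{p}\sum_{a\,\rm mod\,p}\sum_{{\bf k}\,\rm mod\,p}\chi_{p}(\psi_{2}({\bf k}))\,e_{p}\bigl(a\,\psi_{1}({\bf k})+{\bf w}\cdot{\bf k}\bigr),
$$
and then linearise the quadratic character by the Gauss--sum identity
$\chi_{p}(n)=g(\chi_{p})^{-1}\sum_{t\,\rm mod\,p}\chi_{p}(t)e_{p}(tn)$, valid also for $n\equiv0$. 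This reduces matters to the double sum
$$
p\,S_{p,1}({\bf w})=\frac{1}{g(\chi_{p})}\sum_{a,t\,\rm mod\,p}\chi_{p}(t)\,G(a,t,{\bf w}),\qquad G(a,t,{\bf w})=\sum_{{\bf k}\,\rm mod\,p}e_{p}\bigl((a\psi_{1}+t\psi_{2})({\bf k})+{\bf w}\cdot{\bf k}\bigr),
$$
a completely factorised Gauss sum in the four variables ${\bf k}$ with coefficients $\gamma_{i}=aa_{i}+tb_{i}$. Lemma~\ref{quadaricsum} evaluates $G$ exactly whenever all $\gamma_{i}\not\equiv0\pmod p$.

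\medskip

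\noindent\textbf{Generic pairs.} Call $(a,t)$ \emph{generic} if $\prod_{i=1}^{4}\gamma_{i}\not\equiv 0\pmod p$. On the generic locus, Lemma~\ref{quadaricsum} gives
$G(a,t,{\bf w})=p^{2}\chi_{p}(\gamma)\,e_{p}\!\bigl(-\overline{4\gamma}\,\tilde{\phi}_{a,t}({\bf w})\bigr)$ where $\gamma=\prod\gamma_{i}$. For $t\ne 0$ I would substitute $a=st$, so that $\gamma=t^{4}f(s)$ with $f(s)=\prod_{i}(sa_{i}+b_{i})$, and $\tilde{\phi}_{a,t}({\bf w})=t^{3}g(s,{\bf w})$ with
$$
g(s,{\bf w})=\sum_{i=1}^{4}w_{i}^{2}\prod_{j\ne i}(sa_{j}+b_{j}).
$$
Using $\chi_{p}(t^{5})=\chi_{p}(t)$, the inner sum over $t\ne 0$ becomes a Gauss sum
$$
\sum_{t\ne0}\chi_{p}(t)\,e_{p}\!\bigl(-\overline{t}\,g(s,{\bf w})\,\overline{4f(s)}\bigr)=g(\chi_{p})\,\chi_{p}(-1)\chi_{p}(f(s))\chi_{p}(g(s,{\bf w}))
$$
when $g(s,{\bf w})\not\equiv0$, and vanishes otherwise. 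The two factors of $\chi_{p}(f(s))$ square to $1$, and the whole generic block collapses to
$$
p^{2}\chi_{p}(-1)\sum_{s\,\rm mod\,p}\chi_{p}\bigl(g(s,{\bf w})\bigr),
$$
a single Jacobsthal-type sum over a polynomial of degree $\le 3$ in $s$. Since $g(s,{\bf w})$ has odd generic degree $3$ with leading coefficient $\tilde{\psi}_{1}({\bf w})$, it cannot be a constant times a square (of any polynomial of matching degree) except in a low--dimensional locus of ${\bf w}$. Weil's bound then gives $O(\sqrt p)$, and the generic contribution to $p\,S_{p,1}$ is $O(p^{5/2})$.

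\medskip

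\noindent\textbf{Exceptional pairs.} The hypothesis $p\nmid\mathcal{D}$ ensures that at most one of the $\gamma_{i}$ can vanish simultaneously (two vanishings would force a $2\times2$ minor of $\begin{bmatrix}a_{i}&b_{i}\end{bmatrix}$ to be zero mod $p$). For each $i$ and each $a\ne 0$ there is a unique $t_{i}(a)=-aa_{i}\overline{b_{i}}$ with $\gamma_{i}=0$. On these lines the inner Gauss sum collapses over $k_{i}$ to $p\cdot\mathbf{1}_{p\mid w_{i}}$ and the remaining three--variable quadratic Gauss sum is of size $p^{3/2}$; a short calculation shows its phase equals $\overline{a}\,q_{i}({\bf w})$ for an explicit quadratic $q_{i}$, and the leading character factor is $\chi_{p}(a)\cdot(\text{const})$. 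The remaining sum over $a\ne 0$ therefore reduces to a Salié sum
$$
\sum_{a\ne 0}\chi_{p}(a)\,e_{p}\bigl(\overline{a}\,q_{i}({\bf w})\bigr)=O(\sqrt p),
$$
so each of the four exceptional lines contributes $O(p^{5/2}/\sqrt p\cdot\sqrt p)=O(p^{5/2})$ to $p\,S_{p,1}$. Adding the generic and exceptional contributions and dividing by $p$ gives the desired bound $S_{p,1}({\bf w})\ll p^{3/2}$.

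\medskip

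\noindent\textbf{Main obstacle.} The subtle point is the Weil--bound step: for exceptional ${\bf w}$ lying on $\tilde{\psi}_{1}({\bf w})\equiv 0$ the polynomial $g(s,{\bf w})$ drops to degree $\le 2$ and can in principle become a perfect square, in which case $\sum_{s}\chi_{p}(g(s,{\bf w}))$ is of size $p$ rather than $\sqrt p$. I would have to verify that the coprimality hypotheses $p\nmid 2\alpha\mathcal{D}$ (together with squarefreeness of $g$ as a polynomial in $s,{\bf w}$) rule out such degenerations uniformly, or else recover the saving by reintroducing the $t$--sum as a Kloosterman sum of strictly smaller size in those strata. Modulo this verification, the outline above yields the claimed bound.
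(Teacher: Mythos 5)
Your approach is essentially the same as the paper's, just with the exponential--sum variables parametrised differently. The paper also detects the congruence $\psi_{1}({\bf k})\equiv 0\pmod p$ by an additive character over $\ell$, expands $\chi_{p}$ by a Gauss sum over $m$, applies Lemma~\ref{quadaricsum} to the resulting four--variable quadratic Gauss sum, and reduces to the one--variable sum $p\sum_{\ell}\chi_{p}\bigl(-\tilde{\psi}_{\ell}({\bf w})\bigr)+O(p^{3/2})$, bounding the latter by Weil. Your $s$ is the paper's $\ell$, and your $g(s,{\bf w})=\sum_{i}w_{i}^{2}\prod_{j\neq i}(sa_{j}+b_{j})$ is exactly the paper's $\tilde{\psi}_{\ell}({\bf w})$; the handling of the four exceptional lines (where one $c_{i}$ vanishes) is likewise the same up to cosmetics.

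The ``main obstacle'' you flag, however, is not a defect only of your write--up: the paper itself invokes the Weil bound on $\sum_{\ell}\chi_{p}\bigl(-\tilde{\psi}_{\ell}({\bf w})\bigr)$ without verifying that $\tilde{\psi}_{\ell}({\bf w})$, as a polynomial in $\ell$, is not a constant times a perfect square modulo $p$. When $\tilde{\psi}_{1}({\bf w})\not\equiv 0\pmod p$ the polynomial has degree $3$ and the issue cannot arise, but on the locus $\tilde{\psi}_{1}({\bf w})\equiv 0\pmod p$ the degree drops to $\le 2$ and degeneration is genuinely possible. For instance, with $\psi_{1}=x_{1}^{2}+x_{2}^{2}+x_{3}^{2}+3x_{4}^{2}$, $\psi_{2}=x_{1}^{2}+2x_{2}^{2}+3x_{3}^{2}+4x_{4}^{2}$, $p=7$ and ${\bf w}=(1,2,3,0)$, one has $\tilde{\psi}_{1}({\bf w})=42\equiv 0\pmod 7$ and a direct computation gives $\tilde{\psi}_{\ell}({\bf w})\equiv 4(\ell-1)^{2}\pmod 7$, so $\chi_{7}\bigl(-\tilde{\psi}_{\ell}({\bf w})\bigr)$ is the constant $-1$ off $\ell=1$ and the $\ell$--sum has size $p$, not $\sqrt p$. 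Thus your suspicion is correct: the Weil step requires an additional argument ruling out this square--degeneration uniformly (or showing that the exceptional ${\bf w}$ contribute acceptably on average), and the paper does not supply that argument. Your proposal therefore reproduces the paper's proof faithfully, including its unaddressed gap; you have in effect been more careful than the source in identifying where a verification is still needed.
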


\begin{proof}
We have 
$$\chi_{p}(x)= \frac{1}{\tau(p)} \sideset{}{^\star}{\sum}_{m \, \rm mod \, p} \chi_{p}(m) \, e_{p} \left(mx\right),$$
where $\tau(p)$ is the Gauss  sum associated with the quadratic residue character $\chi_{p}$. We write this expression of $\chi_{p}$ in \eqref{case1}, then we get 
$$S_{p,1}({\bf w}) = \frac{1}{\tau(p)} \sum_{m \, \rm mod \, p} \chi_{p}(m) \, \sum_{\substack{{\bf k} \,  \rm mod \, p \\ \psi_{1}({\bf k}) \equiv \, 0 \,  \rm mod \, p}}  \, e \left(\frac{{m \psi_{2}({\bf k})+ {\bf w}.{\bf k}}}{p}\right).$$
We now pick the condition $\psi_{1}({\bf k}) \equiv \, 0 \,  \rm mod \, p$ in the inner sum of above equation using the additive character. Indeed,
\begin{equation} \label{innersumk}
S_{p,1}({\bf w}) = \frac{1}{p \tau(p)} \sum_{m \, \rm mod \, p} \chi_{p}(m) \, \sum_{\ell \, \rm mod \, p } \, \sum_{\substack{{\bf k} \,  \rm mod \, p }}  \, e \left(\frac{{m \left(\psi_{2}({\bf k}) + \ell \psi_{1}({\bf k}) \right)+ {\bf w}.{\bf k}}}{p}\right).
\end{equation}
Let $\psi_{\ell}({\bf w}) = \psi_{2}({\bf k}) + \ell \psi_{1}({\bf k})$. Thus we have $\psi_{\ell}({\bf w}) = \sum_{i=1}^{4} c_{i} w_{i}^2$, where $c_{i}= b_{i}+ \ell a_{i}$ for $i=1,2,3,4$. We set $\gamma = \prod_{i=1}^{4}c_{i}$. For  primes $p$ with $p \nmid 2 \gamma$, the innermost sum in \eqref{innersumk}, by Lemma \ref{quadaricsum}, is given by
$$\sum_{\substack{{\bf k} \,  \rm mod \, p }}  \, e \left(\frac{{m \left(\psi_{2}({\bf k}) + \ell \psi_{1}({\bf k}) \right)+ {\bf w}.{\bf k}}}{p}\right) = \chi_{p}(\gamma) \,p^2 \, e\left( \frac{- \overline{4 m \gamma} \, \tilde{\psi}_{\ell} (\bf w)}{p}\right).$$
On the other hand if $p \mid c_{i_{0}}$ for some $i_{0}$, then $\ell \equiv \, - b_{i_{0}} a_{i_{0}}^{-1} \, \rm mod \, p$ and $p \mid w_{i_{0}}$ otherwise the sum vanishes. By our assumption on prime $p \nmid 2 \alpha \mathcal{D}$, there is only one such $i$ for which $p \mid c_{i}$. So we can apply Lemma \ref{quadaricsum} to evaluate the above sum over $m$, we get that each $\ell \equiv \, - b_{i_{0}} a_{i_{0}}^{-1} \, \rm mod \, p$ contributes at most $p^{3/2}$ to $S_{p,1}$. Hence we get
\begin{align*}
S_{p,1}({\bf w}) & = \frac{p}{\tau (p)} \sum_{\substack{\ell \, \rm mod \, p \\ \ell \not \equiv - b_{i} a_{i}^{-1} \, \, \forall i}} \sum_{m \, \rm mod \, p} \chi_{p}(m \gamma) \, e\left(\frac{- \overline{4 m \gamma} \tilde{\psi}_{\ell} ({\bf w}) }{p}\right) + O\left(p^{3/2}\right) \\
& = p \, \sum_{\substack{\ell \, \rm mod \, p \\ \ell \not \equiv - b_{i} a_{i}^{-1} \, \, \forall i}} \, \chi_{p}\left(- \tilde{\psi}_{\ell} ({\bf w})\right) + O\left(p^{3/2}\right) \\
&\ll p^{3/2},
\end{align*}
where last inequality follows from the Weil bound.
\end{proof}

\subsection{Estimates of $S_{p,p^{r}}$} In this case the character sum given as 
\begin{equation} \label{case2}
S_{p,p^{r}} ({\bf w}) = \sideset{}{^\star}{\sum}_{a \, \rm mod \, p^{r}}  \, \sum_{\substack{{\bf k} \,  \rm mod \, p^{r+1} \\ \psi_{1}({\bf k}) \equiv \, 0 \,  \rm mod \, p}} \chi_{p} \left(\psi_{2} \left({\bf k}\right) \right) \, e \left(\frac{a \psi_{1}({\bf k}) + {\bf w}.{\bf k}}{p^{r+1}}\right).
\end{equation}
In the following lemma gives a bound on this character sum.
\begin{lemma} \label{spprbound}
 Let $r \geq 1$. For any $p \nmid 2 \alpha$ we have 
$$S_{p,p^{r}} ({\bf w}) \ll p^{2r +\frac{3}{2}} \,  (p^{r},\tilde{\psi}_{1}({\bf w}) ).$$
\end{lemma}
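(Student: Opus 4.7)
My approach mirrors the proof of Lemma~\ref{sp1bound}, lifting it to the higher prime-power modulus $p^{r+1}$. First, I would detect the congruence $\psi_{1}({\bf k})\equiv 0\bmod p$ via the identity $\mathbf{1}[\psi_{1}({\bf k})\equiv 0\bmod p]=p^{-1}\sum_{b\bmod p}e_{p}(b\psi_{1}({\bf k}))$, noting that $(a,b)\in(\mathbb{Z}/p^{r})^{\ast}\times(\mathbb{Z}/p)$ corresponds bijectively, via $(a,b)\mapsto a+bp^{r}$, to a single variable $a'\in(\mathbb{Z}/p^{r+1})^{\ast}$. Expanding $\chi_{p}(\psi_{2}({\bf k}))=\tau(p)^{-1}\sum_{m\in(\mathbb{Z}/p)^{\ast}}\chi_{p}(m)e_{p}(m\psi_{2}({\bf k}))$ then reduces the inner ${\bf k}$-sum to a complete quadratic exponential sum modulo $p^{r+1}$ whose diagonal form has coefficients $\gamma_{i}=a'A_{i}+mp^{r}B_{i}$, all coprime to $p$. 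Lemma~\ref{quadaricsum} evaluates it in closed form, producing the factor $p^{2(r+1)}e_{p^{r+1}}(-\overline{4\gamma}\,\tilde{\phi}({\bf w}))$ (up to a unit-modulus sign), where $\gamma=\prod_{i}\gamma_{i}$ and $\tilde{\phi}({\bf w})=\sum_{i}(\gamma/\gamma_{i})w_{i}^{2}$.

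The technical heart is a $p$-adic expansion of $-\overline{4\gamma}\,\tilde{\phi}({\bf w})$ modulo $p^{r+1}$. Because $r\geq 1$, the cross-terms of order $(mp^{r})^{2}$ vanish modulo $p^{r+1}$, so both $\gamma$ and $\tilde{\phi}({\bf w})$ are linear in $m$: one computes
\[
\gamma\equiv a'^{4}\alpha+a'^{3}mp^{r}C_{1},\qquad \tilde{\phi}({\bf w})\equiv a'^{3}\tilde{\psi}_{1}({\bf w})+a'^{2}mp^{r}T({\bf w})\pmod{p^{r+1}},
\]
where $C_{1}=\sum_{i}B_{i}\alpha/A_{i}$ and $T({\bf w})=\sum_{i}w_{i}^{2}\sum_{k\neq i}B_{k}\alpha/(A_{i}A_{k})$ are integers depending only on the forms and ${\bf w}$. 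Inverting $4\gamma$ via a geometric series (discarding the now-trivial $O(p^{2r})$-terms) and multiplying against $\tilde{\phi}({\bf w})$ gives, after the expected cancellations,
\[
-\overline{4\gamma}\,\tilde{\phi}({\bf w})\equiv -\overline{4a'\alpha}\,\tilde{\psi}_{1}({\bf w})-mp^{r}\overline{a'^{2}}\,E({\bf w})\pmod{p^{r+1}},
\]
with an expression $E({\bf w})$ depending only on ${\bf w}$. Achieving this clean factorisation --- in particular isolating the $a'$-dependence of the correction term as the multiplicative $\overline{a'^{2}}$ that the quadratic character will absorb --- is the main obstacle of the argument.

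With this expansion the double sum decouples. The $m$-sum becomes a twisted Gauss sum $\sum_{m\in(\mathbb{Z}/p)^{\ast}}\chi_{p}(m)e_{p}(-m\overline{a'^{2}}E({\bf w}))=\chi_{p}(-E({\bf w}))\tau(p)\mathbf{1}[p\nmid E({\bf w})]$, independent of $a'$ because $\chi_{p}(a'^{2})=1$. The $a'$-sum, via the bijection $b=-\overline{4a'\alpha}$ on $(\mathbb{Z}/p^{r+1})^{\ast}$, reduces to the Ramanujan sum $c_{p^{r+1}}(\tilde{\psi}_{1}({\bf w}))$. Collecting the prefactors $p^{-1}$ (congruence detection), $\tau(p)^{-1}$ (Gauss decomposition), $p^{2(r+1)}$ (Lemma~\ref{quadaricsum}) and $\tau(p)$ ($m$-sum) leaves an overall factor of $p^{2r+1}$, yielding
\[
S_{p,p^{r}}({\bf w})=p^{2r+1}\,(\text{unit-modulus factor})\,c_{p^{r+1}}(\tilde{\psi}_{1}({\bf w})),
\]
from which the stated bound follows via the standard estimate on the Ramanujan sum in terms of $(p^{r+1},\tilde{\psi}_{1}({\bf w}))$.
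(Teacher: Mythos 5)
Your approach is essentially the same as the paper's: detect the congruence $\psi_1(\mathbf{k})\equiv 0 \bmod p$ with an additive character, expand $\chi_p$ via the Gauss sum, apply Lemma~\ref{quadaricsum}, and then expand $\overline{4\gamma}\,\tilde\phi(\mathbf{w})$ $p$-adically modulo $p^{r+1}$. Your bookkeeping is in fact cleaner than the paper's --- by fusing the detection variable $b\bmod p$ with $a\bmod p^r$ into a single $a'\in(\mathbb{Z}/p^{r+1})^{\ast}$ you avoid the paper's device of a fictitious shift $a\mapsto a+xp^r$ together with a separate $\ell$-variable. The $p$-adic expansion you sketch is correct (the key points being that $2r\geq r+1$ kills the quadratic correction, and $\sum_i\overline{a_i}w_i^2\equiv\overline{\alpha}\,\tilde\psi_1(\mathbf{w})\bmod p^{r+1}$), and carrying the computation through gives
\[
S_{p,p^r}(\mathbf{w})\;=\;p^{2r+1}\,\chi_p(\alpha)^{r+1}\,\chi_p\!\left(E(\mathbf{w})\right)\,c_{p^{r+1}}\!\left(\tilde\psi_1(\mathbf{w})\right),
\]
which agrees with the paper's intermediate expression once one uses $c_{p^{r+1}}(n)=p\,c_{p^{r}}(n/p)$ for $p\mid n$.

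The last sentence of your argument, however, conceals a genuine gap. The standard estimate $|c_{p^{r+1}}(n)|\leq (p^{r+1},n)$ gives $|S_{p,p^r}(\mathbf{w})|\leq p^{2r+1}(p^{r+1},\tilde\psi_1(\mathbf{w}))$, and this is \emph{not} $\ll p^{2r+\frac32}(p^{r},\tilde\psi_1(\mathbf{w}))$: when $p^{r+1}\mid\tilde\psi_1(\mathbf{w})$ your bound is of size $p^{3r+2}$ while the lemma asserts $p^{3r+\frac32}$, a loss of $p^{1/2}$. Since your formula is an exact evaluation, and $\chi_p(E(\mathbf{w}))$ has no reason to vanish on that locus, the stated estimate simply does not follow from it. Tracing the paper's own proof shows where the $p^{1/2}$ comes from: after the $x$-averaging forces $p\mid\tilde\psi_1(\mathbf{w})$, the linear coefficient $\sum_i\overline{a_i}^2a_iw_i^2\equiv\overline{\alpha}\tilde\psi_1(\mathbf{w})$ in the $\ell$-sum is $\equiv 0\bmod p$, so $\sum_{\ell}\chi_p(\cdots)$ is \emph{constant} in $\ell$ and equals $p\,\chi_p(E(\mathbf{w}))$ --- it is not a Weil-type sum of size $O(p^{1/2})$ as the paper's final line presumes. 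Thus the factor $p^{2r+\frac32}$ in the lemma statement is not reproducible by the exact evaluation; your proof should stop at the exact formula and either correct the exponent to $p^{2r+2}$ (equivalently $(p^{r+1},\tilde\psi_1(\mathbf{w}))$ in place of $p^{1/2}(p^r,\tilde\psi_1(\mathbf{w}))$) or explain why the offending range $p^{r+1}\mid\tilde\psi_1(\mathbf{w})$ can be discarded in the downstream application.
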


\begin{proof}
Writing the expression for $\chi_{p}$ and detecting the equation $\psi_{1}({\bf k}) \equiv \, 0 \,  \rm mod \, p$ using additive characters in $S_{p,p^{r}} ({\bf w})$, in as in the previous case, we get that
\begin{equation} \label{sprvalue}
S_{p,p^{r}}({\bf w}) = \frac{1}{p \tau(p)} \sum_{m \, \rm mod \, p} \chi_{p}(m) \, \sum_{\ell \, \rm mod \, p } \, \sideset{}{^\star}{\sum}_{a \, \rm mod \, p^{r}} \, \sum_{\substack{{\bf k} \,  \rm mod \, p^{r+1} }}  \, e \left( \frac{\psi_{a,m,\ell}({\bf k}) + {\bf k}.{\bf w}}{p^{r+1}}\right),
\end{equation}
where $\psi_{a,m,\ell}({\bf k}) = a \, \psi_{1}({\bf k}) + m p^{r} \left(\psi_{2}({\bf k}) + \ell \psi_{1}({\bf k}) \right)$. We note that if we replace $a$ by $a+x p^{r}$ with $x \, \rm mod p$ in $\psi_{a,m,\ell}({\bf k})$, then the value of 
$$e\left(\frac{\psi_{a,m,\ell}({\bf k}) + {\bf k}.{\bf w}}{p^{r+1}}\right)$$
does not change. So let  $\psi_{a ,m,\ell,x}({\bf k}) =\psi_{a+x p^{r},m,\ell}({\bf k})$. We write $\psi_{a,m,\ell,x}({\bf k}) = \sum_{i=1}^{4} c_{i} k_{i}^2$, where 
$$c_{i}= a a_{i} + m p^{r} \left(b_{i} + \ell a_{i}\right) + x p^{r} a_{i},$$
note that $c_{i}$ invertible modulo $p$ as $p \nmid 2 \alpha$, and its inverse modulo $p$ is given by
$$\overline{c}_{i} = \overline{a a_{i}} - \overline{a a_{i}}^2 \, m p^{r} \left(b_{i} + \ell a_{i}\right) - \overline{a }^2   x p^{r} \overline{  a_{i}} $$
for $i=1,2,3$ and $4$. By Lemma \ref{quadaricsum} we get
$$\sum_{\substack{{\bf k} \,  \rm mod \, p^{r+1} }}  \, e \left( \frac{\psi_{a,m,\ell}({\bf k}) + {\bf k}.{\bf w}}{p^{r+1}}\right) = \chi_{p}(\alpha)^{r+1} p^{2r+2} \, e\left(\frac{- \overline{4 \gamma} \,  \tilde{\psi}_{a,m,\ell} ({\bf w})}{p^{r+1}}\right),$$
where $\gamma= \prod_{i=1}^{4} c_{i}$. 

\vspace{.5cm}
Now we see that
\begin{align*}
&e\left(\frac{- \overline{4 \gamma} \,  \tilde{\psi}_{a,m,\ell} ({\bf w})}{p^{r+1}}\right)  = e\left(\frac{- \bar{4} \, \sum_{i=1}^{4} \overline{ c_{i}} w_{i}^2}{p^{r+1}}\right) \\
& = e\left(\frac{- \bar{4} \bar{a} \, \sum_{i=1}^{4} \overline{ a_{i}} w_{i}^2}{p^{r+1}}\right) \, e\left(\frac{ \bar{4} m \overline{a}^{2}  \left(\sum_{i=1}^{4} \overline{  a_{i}}^{2} b_{i} w_{i}^2 +  \ell \sum_{i=1}^{4} \overline{  a_{i}}^{2} a_{i} w_{i}^2\right) }{p}\right) e\left(\frac{ x \, \bar{4} \bar{a}^2 \, \sum_{i=1}^{4} \overline{ a_{i}} w_{i}^2}{p}\right)
\end{align*}
by using the value of $\overline{c_{i}}$. The last factor above, involving $x$, tells us that the sum over ${\bf k}$ is zero unless $p \mid \tilde{\psi}_{1}({\bf w})$. keeping above things in mind, first interchanging sum over $a$ and $m$ and changing variable $m \mapsto m a^2$ and then execute sum over $a$ in the expression of $S_{p,p^{r}}$ given in \eqref{sprvalue}. Then we get
\begin{align*}
S_{p,p^{r}}({\bf w}) = &p^{2r+1} \, \chi_{p}(\alpha)^{r+1} \,   h_{p^{r+1}}\left(\tilde{\psi}_{1}(\bf w)\right)  \sum_{\ell \, \rm mod \, p} \, \\
 & \times \sum_{\ell \, \rm mod \, p} \, \frac{1}{\tau(p)} \sum_{m \, \rm mod \, p} \chi_{p}(m) \,e\left(\frac{ \bar{4} m   \left(\sum_{i=1}^{4} \overline{  a_{i}}^{2} b_{i} w_{i}^2 +  \ell \sum_{i=1}^{4} \overline{  a_{i}}^{2} a_{i} w_{i}^2\right) }{p}\right) \\
 &= p^{2r+1} \, \chi_{p}(\alpha)^{r+1} \,   h_{p^{r}}\left(\frac{\tilde{\psi}_{1}(\bf w)}{p}\right)  \sum_{\ell \, \rm mod \, p} \, \chi_{p}\left(\sum_{i=1}^{4} \overline{  a_{i}}^{2} b_{i} w_{i}^2 +  \ell \sum_{i=1}^{4} \overline{  a_{i}}^{2} a_{i} w_{i}^2\right) \\
 & \ll p^{2r+\frac{3}{2}} \, \left(p^{r},\tilde{\psi}_{1}({\bf w})/p\right), 
\end{align*}
in the last inequality we used bound on Ramanujan sum and  the Weil bound for the character sum.  Hence, the lemma follows.
\end{proof}

\subsection{Estimates of $S_{1,p^{r}}$}
In this case the character sum is given by
$$S_{1,p^{r}} ({\bf w}) = \sideset{}{^\star}{\sum}_{a \, \rm mod \, p^{r}}  \, \sum_{\substack{{\bf k} \,  \rm mod \, p^{r} }} \, e \left(\frac{a \psi_{1}({\bf k}) + {\bf w}.{\bf k}}{p^{r}}\right).$$
Explicit evaluation of this sums studied in \cite{H-R}. 
\begin{lemma}  \cite[Lemma 4.4]{H-R}
For any prime $p \nmid 2 \alpha$, we have
$$S_{1,p^{r}} ({\bf w})= \chi_{p}(\alpha)^{r} \, h_{p^{r}}(\tilde{\psi}_{1}({\bf w})) \, p^{2r}.$$
\end{lemma}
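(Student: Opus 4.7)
The plan is to peel off the two summations in the order $\bf k$-first, $a$-second. For each fixed $a$ with $(a,p)=1$, the inner sum $\sum_{\bf k \, \rm mod \, p^r} e_{p^r}(a\psi_1(\bf k)+\bf k\cdot\bf w)$ is exactly of the shape treated in Lemma \ref{quadaricsum}: take $\phi=\psi_{1}$, so that the coefficients are $\gamma_i=a_i$ and $\gamma=\alpha$, and take $m=a$. The hypothesis $p\nmid 2\alpha$ ensures that $p\nmid 2\prod\gamma_i$, and the $\star$ on the outer sum supplies $(p,m)=1$. Unifying the two parity cases by writing $\chi_p(\alpha)^r$ (which is $1$ for $r$ even and $\chi_p(\alpha)$ for $r$ odd), Lemma \ref{quadaricsum} yields
\[
\sum_{\bf k \, \rm mod \, p^r} e_{p^r}\bigl(a\psi_{1}(\bf k)+\bf k\cdot\bf w\bigr)=p^{2r}\,\chi_{p}(\alpha)^{r}\,e_{p^r}\bigl(-\overline{4a\alpha}\,\tilde{\psi}_{1}(\bf w)\bigr).
\]

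Substituting this back and pulling the $a$-independent factor $p^{2r}\chi_p(\alpha)^r$ outside, it remains to evaluate
\[
\sideset{}{^\star}{\sum}_{a \, \rm mod \, p^r} e_{p^r}\bigl(-\overline{4a\alpha}\,\tilde{\psi}_{1}(\bf w)\bigr).
\]
Since $-\overline{4\alpha}$ is a unit modulo $p^{r}$ (using $p\nmid 2\alpha$), the map $a\mapsto b:=-\overline{4a\alpha}$ is a bijection of $(\mathbb{Z}/p^r\mathbb{Z})^{\star}$ onto itself. Under this change of variables the sum becomes $\sideset{}{^\star}{\sum}_{b \, \rm mod \, p^r} e_{p^r}(b\,\tilde{\psi}_{1}(\bf w))$, i.e.\ the Ramanujan sum, which by the convention of \cite{H-R} is precisely $h_{p^{r}}(\tilde{\psi}_{1}(\bf w))$. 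Assembling the pieces gives exactly $S_{1,p^{r}}(\bf w)=\chi_{p}(\alpha)^{r}\,h_{p^{r}}(\tilde{\psi}_{1}(\bf w))\,p^{2r}$.

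There is essentially no obstacle; the only place care is needed is in the application of Lemma \ref{quadaricsum} when $r$ is odd, where the product $\prod_{i=1}^{4}\chi_{p}(m\gamma_{i})$ arising from the four Gauss sums collapses to $\chi_{p}(m^{4}\alpha)=\chi_{p}(\alpha)$. It is this cancellation of the $m$-dependence inside the quadratic character that lets the resulting sum over $a$ be read off as a clean Ramanujan sum rather than as a twisted Kloosterman-type expression, and that in turn makes the final answer depend on $a$ only through that Ramanujan sum.
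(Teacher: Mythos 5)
Your derivation is correct, and since the paper simply cites \cite[Lemma 4.4]{H-R} without reproducing a proof, it is also exactly the route the paper's own machinery is set up to supply: apply Lemma~\ref{quadaricsum} to the inner ${\bf k}$-sum with $\phi=\psi_1$, $\gamma_i=a_i$, $\gamma=\alpha$, $m=a$, note that $\chi_p(\alpha)^r$ correctly encodes the parity distinction (since $\chi_p(\alpha)=\pm1$ for $p\nmid\alpha$), and then recognize the remaining $a$-sum as a Ramanujan sum after the unit change of variables $a\mapsto -\overline{4a\alpha}$. One small remark on your closing paragraph: the collapse $\prod_{i}\chi_p(m\gamma_i)=\chi_p(m^4\alpha)=\chi_p(\alpha)$ (together with $\varepsilon(p)^4=1$) is already built into the statement of Lemma~\ref{quadaricsum}, so at the level of your proof that cancellation has been absorbed before you ever sum over $a$; the sum over $a$ reduces to a Ramanujan sum simply because, after the lemma, the entire $a$-dependence sits in the linear exponential $e_{p^r}(-\overline{4a\alpha}\,\tilde{\psi}_1({\bf w}))$.
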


From this lemma and standard properties of the Ramanujan sum we can deduce following corollaries.
\begin{corollary} \label{s1cbound1}
For any prime $p \nmid 2 \alpha$, we have 
\[
S_{1,p^{r}}=
\begin{cases}
\chi_{p}(\alpha) \, p^2 (p-1) & \text{if} \, p \mid \tilde{\psi}_{1}({\bf w}) \\
- \chi_{p}(\alpha) p^2 & p \nmid \tilde{\psi}_{1}({\bf w}).
\end{cases}
\]
\end{corollary}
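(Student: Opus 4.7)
The plan is to specialize the preceding lemma to $r = 1$ and then evaluate the resulting Ramanujan sum in the two possible cases. Setting $r = 1$ in the formula
$$S_{1,p^{r}} ({\bf w})= \chi_{p}(\alpha)^{r} \, h_{p^{r}}(\tilde{\psi}_{1}({\bf w})) \, p^{2r}$$
gives $S_{1,p}({\bf w}) = \chi_{p}(\alpha) \, h_{p}(\tilde{\psi}_{1}({\bf w})) \, p^{2}$, where $h_{p}(n)$ is the Ramanujan sum $\sideset{}{^\star}{\sum}_{a \, \rm mod \, p} e_{p}(an)$ that arises from the $a$-sum in the definition of $S_{1,p^{r}}$. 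This is the whole content of the reduction step: the dependence of $S_{1,p}$ on ${\bf w}$ is entirely funneled through the single quantity $\tilde{\psi}_{1}({\bf w}) \, \rm mod \, p$.

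What remains is the standard evaluation of this Ramanujan sum. Writing $h_{p}(n) = \sum_{a = 0}^{p-1} e_{p}(an) - 1$ and invoking orthogonality of additive characters modulo $p$, I obtain $h_{p}(n) = p - 1$ when $p \mid n$ and $h_{p}(n) = -1$ otherwise. Substituting these two values into the expression for $S_{1,p}({\bf w})$ produces the two branches stated in the corollary. No real obstacle is expected: the analytic content is already contained in the previous lemma, and the corollary is simply the explicit evaluation of $c_{p}$ on the two possible residue classes of $\tilde{\psi}_{1}({\bf w})$ modulo $p$.
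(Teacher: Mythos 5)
Your proof is correct and is exactly the argument the paper intends, which it leaves implicit by simply asserting that the corollary follows "from this lemma and standard properties of the Ramanujan sum." You correctly read the $S_{1,p^r}$ in the corollary's statement as the case $r=1$ (as the $r$-independent right-hand side forces), and the evaluation $h_p(n) = p-1$ for $p \mid n$, $h_p(n) = -1$ otherwise, is the standard one.
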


\begin{corollary} \label{s1cbound}
We have 
$$S_{1,c}({\bf w}) \ll c^{2} (c, \tilde{\psi}_{1}({\bf w})),$$
where implied constants depending on the forms $\psi_{1}$ and $\psi_{2}$.
\end{corollary}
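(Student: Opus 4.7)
The plan is to reduce to prime powers via the multiplicativity in Lemma \ref{multi} and then invoke the exact evaluation of $S_{1,p^r}$ from the preceding lemma, combined with the standard bound on the Ramanujan sum. Specializing Lemma \ref{multi} to $q_1 = q_2 = 1$ shows that $S_{1,c}({\bf w})$ is multiplicative in $c$: for coprime $c_1, c_2$ one has $S_{1,c_1 c_2}({\bf w}) = S_{1,c_1}({\bf w}) \, S_{1,c_2}({\bf w})$. Writing $c = \prod_p p^{r_p}$, it therefore suffices to establish the pointwise bound $|S_{1,p^r}({\bf w})| \ll p^{2r} (p^r, \tilde{\psi}_1({\bf w}))$ for each prime power $p^r \parallel c$, with the implied constant possibly depending on $p$ only for the finitely many exceptional primes.

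For $p \nmid 2\alpha$, the preceding lemma gives the exact evaluation $S_{1,p^r}({\bf w}) = \chi_p(\alpha)^r \, h_{p^r}(\tilde{\psi}_1({\bf w})) \, p^{2r}$, where $h_{p^r}$ is the Ramanujan sum modulo $p^r$. The closed form of this Ramanujan sum at prime powers---equal to $p^r - p^{r-1}$, $-p^{r-1}$, or $0$ according as $v_p(n) \geq r$, $v_p(n) = r-1$, or $v_p(n) \leq r-2$---immediately gives $|h_{p^r}(n)| \leq (p^r, n)$. This yields the prime-power bound $|S_{1,p^r}({\bf w})| \leq p^{2r} (p^r, \tilde{\psi}_1({\bf w}))$. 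Taking the product over $p \mid c$ and using $c^2 = \prod_p p^{2r_p}$ together with $(c, \tilde{\psi}_1({\bf w})) = \prod_p (p^{r_p}, \tilde{\psi}_1({\bf w}))$ then gives the claim for $c$ coprime to $2\alpha$.

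For each of the finitely many primes $p \mid 2\alpha$, I would handle $S_{1,p^r}({\bf w})$ by a direct local calculation: executing the sum over $a$ modulo $p^r$ first produces a Ramanujan-type sum in $\psi_1({\bf k})$ controlled by $(p^r, \psi_1({\bf k}))$, which after summing over ${\bf k} \bmod p^r$ gives the same shape $O_p(p^{2r}(p^r, \tilde{\psi}_1({\bf w})))$; the $p$-dependence is absorbed into the constant depending on $\psi_1$ and $\psi_2$. The only real obstacle is this local analysis at bad primes, but it is routine since only finitely many primes need to be treated; the main skeleton---multiplicativity plus the Ramanujan-sum bound---is mechanical.
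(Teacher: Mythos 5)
Your treatment of the primes $p\nmid 2\alpha$ is correct and matches what the paper has in mind: specialize Lemma \ref{multi} to $q_1 = q_2 = 1$ to get multiplicativity of $c \mapsto S_{1,c}({\bf w})$, then combine the exact formula $S_{1,p^r}({\bf w}) = \chi_p(\alpha)^r\, h_{p^r}(\tilde{\psi}_1({\bf w}))\, p^{2r}$ with the Ramanujan-sum bound $\lvert h_{p^r}(n)\rvert \leq (p^r,n)$, and multiply over prime powers.

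Your sketch for the finitely many primes $p \mid 2\alpha$, however, describes a route that would not work. Executing the $a$-sum first gives $\sum_{{\bf k}\bmod p^r} e_{p^r}({\bf w}\cdot{\bf k})\, h_{p^r}(\psi_1({\bf k}))$; bounding this through the pointwise estimate $\lvert h_{p^r}(\psi_1({\bf k}))\rvert \leq (p^r,\psi_1({\bf k}))$, as you propose, discards the ${\bf w}$-dependence entirely and yields something of order $\sum_{{\bf k}\bmod p^r}(p^r,\psi_1({\bf k})) \gg p^{4r}$, whereas the target bound $p^{2r}(p^r,\tilde{\psi}_1({\bf w}))$ never exceeds $p^{3r}$ and equals $p^{2r}$ for generic ${\bf w}$. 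The factor $(p^r,\tilde{\psi}_1({\bf w}))$ can only be produced by evaluating the Gauss sum over ${\bf k}$ first, variable by variable as in Lemma \ref{quadaricsum}, now tracking the valuations $v_p(a_i)$ at these bad primes; the dual phase in $\tilde{\psi}_1({\bf w})$ then appears, and the subsequent $a$-sum becomes a Ramanujan-type sum in $\tilde{\psi}_1({\bf w})$, with all $v_p(a_i)$-dependent losses bounded by a constant depending only on $\psi_1,\psi_2$. Since $r$ is unbounded, nothing of size $p^{cr}$ can be absorbed into the implied constant, so this order of summation is essential, not cosmetic. The paper also leaves the bad-prime case implicit (the lemma it cites covers only $p\nmid 2\alpha$), so you are reproducing an elision already present there, but the specific route you sketch is not one that closes that gap.
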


\subsection{Cancellation in the sum of $S_{q,c}$}

We have the following lemma.
\begin{lemma} \label{averagemodvalue}
Let $(q,2 \alpha \mathcal{D})=1$. Then for any ${\bf w} \in \mathbb{Z}^4$ with $\tilde{\psi}_{1}({\bf w}) \neq 0$, we have
$$\sum_{\substack{c \leq X \\ q \mid c}} \vert S_{q,c}({\bf w})\vert \ll q^{3/2} \, X^{3} \, \vert q \tilde{\psi}_{1}({\bf w})\vert^{\epsilon}, $$
where implied constants depends only on the forms $\psi_{i}$ and $\epsilon$.
\end{lemma}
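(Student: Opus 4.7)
The plan is to reduce $S_{q,c}({\bf w})$ to its prime-power pieces via Lemma \ref{multi}, plug in the pointwise local bounds already established in Lemma \ref{spprbound} and Corollary \ref{s1cbound}, and then handle the $c$-sum by an elementary gcd manipulation.

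Since $q$ is squarefree and $q \mid c$, every prime $p \mid q$ satisfies $v_p(c) =: r_p \geq 1$. I would factor $c = \prod_{p \mid q} p^{r_p} \cdot c^{\#}$ with $(c^{\#}, q) = 1$ and iterate Lemma \ref{multi} to obtain
\[
S_{q,c}({\bf w}) = \prod_{p \mid q} S_{p,p^{r_p}}({\bf w}) \cdot S_{1,c^{\#}}({\bf w}).
\]
Since $(q, 2\alpha\mathcal{D}) = 1$, Lemma \ref{spprbound} is applicable to each local factor and gives $|S_{p,p^{r_p}}({\bf w})| \ll p^{2r_p + 3/2} (p^{r_p}, \tilde{\psi}_1({\bf w}))$, while Corollary \ref{s1cbound} yields $|S_{1,c^{\#}}({\bf w})| \ll (c^{\#})^2 (c^{\#}, \tilde{\psi}_1({\bf w}))$. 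Multiplying these, noting that $\prod_{p \mid q} p^{3/2} = q^{3/2}$, that $\prod_{p \mid q} p^{2 r_p} \cdot (c^{\#})^2 = c^2$, and that the gcd is multiplicative on coprime arguments, the various factors collapse to the pointwise estimate
\[
|S_{q,c}({\bf w})| \ll q^{3/2} \, c^2 \, (c, \tilde{\psi}_1({\bf w})).
\]

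To finish I would set $N = \tilde{\psi}_1({\bf w})$, write $c = qd$ with $d \leq X/q$, and bound
\[
\sum_{\substack{c \leq X \\ q \mid c}} |S_{q,c}({\bf w})| \ll q^{3/2} \, q^2 \sum_{d \leq X/q} d^2 \, (qd, N).
\]
The elementary inequality $(qd, N) \leq q(d, N)$ is immediate on comparing $p$-adic valuations, and combining with the standard divisor bound $\sum_{d \leq Y} d^2 (d, N) \ll Y^3 |N|^{\epsilon}$ (with $Y = X/q$) delivers exactly $q^{3/2} X^3 |q \tilde{\psi}_1({\bf w})|^{\epsilon}$, as required.

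The whole argument is essentially bookkeeping once the local bounds of the previous subsections are in hand; the only point requiring real attention is the accounting of powers of $q$, since the $q^{3/2}$ from the local character sums, the $q^2$ from the change of variables $c = qd$, and the extra $q$ from the elementary gcd inequality must telescope against the $(X/q)^3$ from the divisor estimate to produce precisely $q^{3/2} X^3$ with no loss. I do not anticipate a genuine obstacle beyond this bookkeeping.
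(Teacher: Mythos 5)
Your proposal is correct and follows essentially the same route as the paper: factor $c$ into the $q$-part and the part coprime to $q$, invoke Lemma~\ref{multi} to split $S_{q,c}$, plug in Lemma~\ref{spprbound} and Corollary~\ref{s1cbound} to get the pointwise bound $q^{3/2}c^{2}(c,\tilde{\psi}_1({\bf w}))$, and finish with an elementary divisor-sum estimate. The only (cosmetic) difference is in the last step, where you substitute $c=qd$ and use $(qd,N)\leq q(d,N)$ instead of the paper's direct divisor manipulation over $c$; both give the same bound.
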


\begin{proof}
We first write $c = q_{\infty} c^{\prime}$ with $(q,c^{\prime})=1$,  where $q_{\infty} = \prod_{p \mid q, p^{r_{i}} || c} p^{r_{i}}$. Then by multiplicativity  of $S_{q,c}$ we get
\begin{align*}
S_{q,c} ({\bf w}) &= S_{q,q_{\infty}} ({\bf w}) \, S_{1,c^{\prime}} ({\bf w}) \\
&= \prod_{\substack{p \mid q \\ p^{r_{i}} || c} } S_{p, p^{r_{i}}} ({\bf w}) \,S_{1,c^{\prime}} ({\bf w}). 
\end{align*}
We now use Corollary \ref{s1cbound} and Lemma \ref{spprbound} to bound $S_{q,c}$,  when $q \mid c$. Indeed,

\begin{align*}
S_{q,c} ({\bf w}) & \ll \prod_{\substack{p \mid q \\ p^{r_{i}} || c} } p^{2 r_{i} +3/2} (p^{r_{i}}, \tilde{\psi}_{1}({\bf w})) \, {c^{\prime}}^{2} (c^{\prime}, \tilde{\psi}_{1}({\bf w})) \\
& = q^{3/2} c^{2} (q_{\infty} , \tilde{\psi}_{1}({\bf w})) \, (c^{\prime}, \tilde{\psi}_{1}({\bf w})) \\
& \leq  q^{3/2} c^{2} (c  , \tilde{\psi}_{1}({\bf w})),
\end{align*}
in the equality above  we use the fact that $q\mid c$. We now estimate the sum given in the lemma. Using the above bound on $S_{q,c}({\bf w})$, we get
\begin{align*}
\sum_{\substack{c \leq X \\ q \mid c}} \vert S_{q,c}({\bf w})\vert & \ll q^{3/2} \sum_{\substack{c \leq X \\ q \mid c}}   c^{2} (c, \tilde{\psi}_{1}({\bf w})) \\
& \ll q^{3/2} \sum_{d \mid \tilde{\psi}_{1}({\bf w}) } d^3 \sum_{c \leq \frac{X}{d}} l^2 \\
&\ll q^{3/2} X^{3} \vert  \tilde{\psi}_{1}({\bf w})\vert^{\epsilon}.
\end{align*}
Hence the lemma follows.
\end{proof}

The proof of the following lemma  closely follows that  of the Lemma 5.2 in \cite{H-R}.
\begin{lemma} \label{cancelationsum}
Suppose $q=q_{1}q_{2}$ is square free, $(q,2 \alpha \mathcal{D}) =1$ and $\alpha =a_{1} a_{2}a_{3}a_{4}$ is not square. Then for ${\bf w} \in \mathbb{Z}^{4}$ with $\tilde{\psi}_{1}({\bf w}) = 0$, we have
$$\sum_{\substack{c \leq X \\ (c,q)=q_{1}}} S_{q_{1},c} ({\bf w})  \, \ll \, q_{1}^{3/2} X^{7/2} \, (Bq)^{\epsilon},$$
where implied constants depends only on the forms $\psi_{1}, \psi_{2}$ and $\epsilon$.
\end{lemma}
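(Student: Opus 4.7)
The plan is to factorise $S_{q_1,c}({\bf w})$ along the primes dividing $q_1$ versus those coprime to $q$, bound the first piece by the local estimates already established, and extract cancellation in the second piece from the Jacobi symbol $\left(\frac{\alpha}{\cdot}\right)$, which is a non-principal Dirichlet character because, by hypothesis, $\alpha$ is not a square.

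First, I would write every $c$ with $(c,q) = q_1$ uniquely as $c = c_1 c'$, where $c_1$ is supported on the primes of $q_1$ (each with exponent $\geq 1$) and $(c',q) = 1$. Lemma \ref{multi} then yields
$$S_{q_1,c}({\bf w}) = S_{q_1,c_1}({\bf w}) \, S_{1,c'}({\bf w}),$$
and Lemma \ref{spprbound} combined with $\tilde{\psi}_1({\bf w}) = 0$ gives $|S_{q_1,c_1}({\bf w})| \ll c_1^{3} q_1^{3/2}$. The admissible $c_1$ form a thin set of cardinality $\ll X^\epsilon$ below $X$ (divisor bound), so the lemma reduces to showing that
$$\Sigma(Y) \; := \; \sum_{\substack{c' \leq Y \\ (c',q) = 1}} S_{1,c'}({\bf w}) \; \ll \; Y^{7/2+\epsilon} \, q^{\epsilon}$$
uniformly for $Y \leq X$.

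For $\Sigma(Y)$ I would substitute the explicit formula preceding Corollary \ref{s1cbound1}: since $\tilde{\psi}_1({\bf w}) = 0$, the Ramanujan factor is $h_{c'}(0) = \phi(c')$, and by multiplicativity
$$S_{1,c'}({\bf w}) = \left(\frac{\alpha}{c'}\right) c'^{2} \phi(c'),$$
valid for $(c',2\alpha\mathcal{D}q) = 1$; the remaining finitely many local factors contribute only $(Bq)^\epsilon$. Writing $c' = k^{2} n$ with $n$ squarefree reduces the Kronecker symbol to $\left(\frac{\alpha}{c'}\right) = \left(\frac{\alpha}{n}\right)$, which, by quadratic reciprocity, corresponds to a non-principal Dirichlet character of modulus $O(\alpha)$. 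Applying P\'olya--Vinogradov together with the $\mu^{2}$-sieve gives
$$\sum_{\substack{n \leq N \\ n \text{ squarefree}}} \left(\frac{\alpha}{n}\right) \ll N^{1/2+\epsilon};$$
partial summation against $c'^{2}\phi(c') \ll c'^3$ then produces $(Y/k^{2})^{7/2+\epsilon}$ from the $n$-sum, and the outer $k$-sum converges because the factor $k^{6}$ from the change of variables is balanced by $k^{-7}$ coming from $(Y/k^2)^{7/2}$, leaving only a harmless $\sum_k k^{-1-2\epsilon}$.

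Combining the two bounds,
$$\sum_{\substack{c \leq X \\ (c,q) = q_1}} S_{q_1,c}({\bf w}) \ll \sum_{c_1} c_1^{3} q_1^{3/2} \, (X/c_1)^{7/2+\epsilon} \ll q_1^{3/2} X^{7/2+\epsilon} (Bq)^\epsilon,$$
the $c_1$-sum being a truncated Euler product over primes dividing $q_1$ that converges absolutely. The main obstacle is the third paragraph: because $\left(\frac{\alpha}{\cdot}\right)$ is trivial on squares, the squareful part of $c'$ does not benefit from character cancellation and must be separated off through the auxiliary variable $k$; one must additionally dispose of the finitely many primes $p \mid 2\alpha\mathcal{D}$ by a trivial local bound, absorbing their contribution into the $(Bq)^\epsilon$ factor.
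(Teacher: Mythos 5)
Your proposal takes a genuinely different route from the paper. The paper proves this lemma via Perron's formula applied to the Dirichlet series $\sum_{(c,q)=q_1} S_{q_1,c}({\bf w}) c^{-s}$, which factors (by the multiplicativity of Lemma~\ref{multi}) into a finite Euler product over $p\mid q_1$ times an $L$-function $L(s-3,\chi_\alpha)$ times an absolutely convergent tail; since $\alpha$ is not a square the character $\chi_\alpha$ is non-principal, $L(s-3,\chi_\alpha)$ has no pole at $s=4$, and the contour can be pushed to $\sigma > 7/2$, producing $X^{7/2}$. You instead work entirely on the coefficient side: the same decomposition $c = c_1 c'$, the same trivial local bound $|S_{q_1,c_1}| \ll c_1^3 q_1^{3/2}$, but then an explicit elementary evaluation of $S_{1,c'} = \left(\frac{\alpha}{c'}\right)c'^2\phi(c')$, the observation that the Jacobi symbol sees only the squarefree part of $c'$, and P\'olya--Vinogradov on the squarefree sum. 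Both arguments hinge on exactly the same input — non-principality of $\left(\frac{\alpha}{\cdot}\right)$ — and both yield $X^{7/2}$, so your route is a valid elementary substitute for the paper's complex-analytic one. What the Perron approach buys is cleanliness in handling the arithmetic weight $c'^2\phi(c')$: it is swallowed automatically by the Euler-product factorization. Your third paragraph glosses over this: the function $n \mapsto (k^2n)^2\phi(k^2 n)$ is not monotone in $n$, so Abel summation against the character cannot be invoked directly as written. This is repairable by the further M\"obius unfolding $\phi(m) = m\sum_{d\mid m}\mu(d)/d$ (and, for $p\mid(k,n)$, yet another layer of conditioning on $g=(k,n)$, with $d(k)\ll k^\epsilon$ choices), after which partial summation applies to the smooth weight $e^3$ and the remaining $d$- and $k$-sums converge with room to spare. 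With that patch spelled out, the argument closes and gives the stated bound.
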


\begin{proof}
To estimate the partial sum  $\sum_{\substack{c \leq X \\ (c,q)=q_{1}}} S_{q_{1},c} ({\bf w})$,  we use the Perron's formula. For this we need to have analytic properties of its Dirichlet series 
$$\sum_{\substack{c=1 \\ (c,q)=q_{1}}}^{\infty} S_{q_{1},c} ({\bf w}) \, c^{-s}.$$
By multiplicative property of the character sum $S_{q,c}({\bf w})$, given in Lemma \ref{multi}, we see that
$$\sum_{\substack{c=1 \\ (c,q)=q_{1}}}^{\infty} S_{q_{1},c} ({\bf w}) \, c^{-s} = \prod_{p \mid q_{1}} \left(S_{p,p}({\bf w}) p^{-s} + S_{p,p^2}({\bf w}) p^{-2s} + \ldots\right) \, F_{q}(s;{\bf w})$$
where
$$F_{q}(s;{\bf w}) = \sum_{\substack{c=1 \\ (c,q)=1}}^{\infty} S_{1,c} ({\bf w}) \, c^{-s} = \prod_{p \nmid q} \left(\sum_{n=0}^{\infty} S_{1,p^{n} }({\bf w}) \,  p^{-ns}\right).$$
It follows from Corollary \ref{s1cbound} that $F_{q}(s;{\bf w})$ converges absolutely for $\sigma > 4$, and from Lemma \ref{spprbound} it follows that the product over $p \mid q_{1}$ is analytic for $\sigma > 7/2$ and is bounded by $O(q_{1}^{3/2})$. For prime $p \nmid 2 \alpha$, by Corollaries \ref{s1cbound1} and \ref{s1cbound} we get
$$\sum_{n=0}^{\infty} S_{1,p^{n} }({\bf w}) \,  p^{-ns} = 1 + p^{2} (p-1) \chi_{p}(\alpha) p^{-s} + O(p^{-1-\delta}),$$
for $\sigma \geq \frac{7+\eta}{2}$. Therefore in this half plane we can write 
$$F_{q}(s;{\bf w}) = L(s-3,\chi_{p}) \, f(s; {\bf w}) \, \prod_{p \mid q} \left(\sum_{n=0}^{\infty} S_{1,p^{n} }({\bf w}) \,  p^{-ns}\right),$$
where $f(s; {\bf w})$ is a Dirichlet series which is convergent absolutely for $\sigma > 7/2$.  Hence $F_{q}(s;{\bf w})$ has a  analytic continuation to the half plane $\sigma > 7/2$.  Also in this plane we have the bound
$$f(s; {\bf w}) \, \prod_{p \mid q} \left(\sum_{n=0}^{\infty} S_{1,p^{n} }({\bf w}) \,  p^{-ns}\right) \ll_{\eta} d(q),$$
for $\sigma > (7+ \eta )/2$. The lemma follows from the Perron's formula, and observing that $L(s-3, \chi_{p})$ does not have a pole at $s=4$ as $\alpha$ is not a square number.

\end{proof}

\section{estimation of the integral $I_{q,c}$}

We recall the integral 
$$I_{q,c}\left({\bf w}\right)= \int_{\mathbb{R}^4} W\left( {\bf y}\right) \, h\left( \frac{c}{Q}, \psi_{1}({\bf y})\right) \, e \left(-\frac{B \,  {\bf w}.{\bf y}}{qc}\right) \, \mathrm{d} {\bf y}.$$
This integral vanishes unless $c \ll Q$ as the function $h(x,y)=0$ unless $x \leq \max (1,2|y|)$.
In the notation of Heath-Brown \cite[Sections 7 and 8]{Heath}, we have
$$I_{q,c}\left({\bf w}\right) = I_{r}^{*}({\bf v}) \quad \text{with} \, \, r= \frac{c}{Q}, \, {\bf v} = \frac{B {\bf w}}{qQ}.$$

The following lemma, follows from  Lemma 14 and Lemma 18 in \cite[Sections 7 and 8]{Heath}.

\begin{lemma} \label{sizeofw}
For ${\bf w} \neq 0$ and for any $N \geq 0$, we have
$$I_{q,c}({w}) \ll_{N} \, \frac{Q}{c} \left( \frac{qQ}{B \vert {\bf w} \vert} \right)^{N}. $$
\end{lemma}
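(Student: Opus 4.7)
The plan is to recast $I_{q,c}({\bf w})$ into the normalized form studied by Heath-Brown and then invoke his standard decay estimates. Making the substitution $r = c/Q$ and ${\bf v} = B{\bf w}/(qQ)$, the integral becomes exactly $I_r^*({\bf v})$ from Sections 7 and 8 of \cite{Heath}, so the statement will follow once we have (i) a uniform bound of size $r^{-1}$, and (ii) arbitrary polynomial decay in $|{\bf v}|$.

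First I would record the trivial bound. Since $W$ is smooth and compactly supported on $[-1,1]^4$ and $h(x,y) \ll x^{-1}$ by the lemma on the delta symbol, taking absolute values inside gives $I_{q,c}({\bf w}) \ll Q/c$, which is the $N=0$ case. For $N \geq 1$, the plan is integration by parts against the oscillating exponential $e(-B{\bf w} \cdot {\bf y}/(qc))$. Pick the coordinate index $i_0$ for which $|w_{i_0}| \geq |{\bf w}|/4$ (say), and integrate by parts $N$ times with respect to $y_{i_0}$. Each step produces a factor of $(2\pi B w_{i_0}/(qc))^{-1} \ll qc/(B|{\bf w}|)$ in front and transfers one derivative onto $W({\bf y}) h(c/Q, \psi_1({\bf y}))$. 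Combined with the $L^\infty$ bound, this yields $I_{q,c}({\bf w}) \ll (Q/c)(qc/(B|{\bf w}|))^N$, and since the integral is supported on $c \ll Q$ we may absorb the $c$ in the numerator to get the claimed shape $(Q/c)(qQ/(B|{\bf w}|))^N$.

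The main obstacle is controlling the iterated $y_{i_0}$-derivatives of $h(c/Q, \psi_1({\bf y}))$. Differentiating in $y_{i_0}$ brings out factors of $\partial_{y_{i_0}} \psi_1({\bf y}) = O(1)$ times derivatives of $h$ in its second argument, and a naive estimate would lose additional powers of $r = c/Q$ with each differentiation, which would be fatal in the regime $c \ll Q$. This is precisely the subtle point that Heath-Brown handles: in Lemma 14 of \cite{Heath} he establishes refined bounds on the derivatives of $h(x,y)$, and in Lemma 18 he uses these to show that the full weighted derivatives of $W({\bf y}) h(r, \psi_1({\bf y}))$ that appear after integration by parts remain of size $r^{-1}$, with no hidden loss. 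Once that input is granted, the proof of the present lemma reduces to the straightforward integration-by-parts bookkeeping sketched above.
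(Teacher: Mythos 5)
Your proposal takes essentially the same route as the paper: after observing that $I_{q,c}({\bf w}) = I_r^*({\bf v})$ with $r = c/Q$ and ${\bf v} = B{\bf w}/(qQ)$, the paper simply cites Lemmas 14 and 18 of Heath-Brown, which is exactly what you do. Your added integration-by-parts sketch is a reasonable gloss on those lemmas, though the phrase "remain of size $r^{-1}$ with no hidden loss" is slightly off — each $y$-derivative of $h$ does cost a factor $r^{-1}$, but this is cancelled by the factor $qc/(B|{\bf w}|) = r \cdot qQ/(B|{\bf w}|)$ produced by each integration by parts, which is precisely why the final bound still carries only a single $Q/c$.
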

By the above lemma, the integral $I_{q,c}$ is negligibly small unless 
$$ \vert {\bf w} \vert \ll \, \frac{qQ}{B} \, B^{\epsilon} = \sqrt{q} B^{\epsilon}.$$ 

In this case we have a bounds on $I_{q,c}$ which are given in the following lemma, which follows from  Lemma 14 and Lemma 22 in \cite[Sections 7 and 8]{Heath}.
\begin{lemma} \label{integralesti}
For $0 < \vert {\bf w}\vert \leq \, \sqrt{q} B^{\epsilon}$, we have
\begin{enumerate}
	\item $I_{q,q_{1} t} ({\bf w}) \ll \, \frac{q_{1}^2 t q_{2}}{B} \, \vert {\bf w} \vert^{-1} \, B^{\epsilon}$ and
	\item $ \frac{d}{dt} I_{q,q_{1} t} ({\bf w}) \ll \, \frac{q_{1}^2  q_{2}}{B} \, \vert {\bf w} \vert^{-1} \, B^{\epsilon}$.
\end{enumerate}
\end{lemma}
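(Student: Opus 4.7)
The plan is to recognize $I_{q,c}({\bf w})$ as an instance of Heath-Brown's normalized oscillatory integral $I_r^*({\bf v})$ after the change of variables $r = c/Q$, ${\bf v} = B{\bf w}/(qQ)$, and then read off both bounds from Lemma 14 and Lemma 22 of \cite[Sections 7--8]{Heath}, exactly as was already done immediately preceding the statement. With $c = q_1 t$ and $Q = B/\sqrt{q}$, these parameters become
$$r = \frac{q_1 t \sqrt{q}}{B}, \qquad |{\bf v}| = \frac{|{\bf w}|}{\sqrt{q}},$$
and the hypothesis $0 < |{\bf w}| \leq \sqrt{q}\,B^{\epsilon}$ places ${\bf v}$ in the regime $0 < |{\bf v}| \leq B^{\epsilon}$ in which Heath-Brown's estimates apply.

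For bound (1), I would invoke Heath-Brown's Lemma 22, which in our normalization yields $I_r^*({\bf v}) \ll r\,|{\bf v}|^{-1}\,B^{\epsilon}$ for $|{\bf v}|>0$; any logarithmic factors present in the original statement are absorbed into $B^{\epsilon}$. The estimate itself comes from a single integration by parts in the direction of the largest component of ${\bf v}$, using the derivative bounds on $h(r, \psi_1({\bf y}))$ supplied by Lemma 14 together with the compact support of $W$. Substituting the values of $r$ and $|{\bf v}|$ then gives
$$I_{q,q_1 t}({\bf w}) \ll \frac{r}{|{\bf v}|}\,B^{\epsilon} = \frac{q_1 t \sqrt{q}}{B}\cdot\frac{\sqrt{q}}{|{\bf w}|}\,B^{\epsilon} = \frac{q_1^2 q_2\,t}{B\,|{\bf w}|}\,B^{\epsilon},$$
which is precisely (1).

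For bound (2), I would differentiate under the integral sign. The parameter $t$ enters $I_{q,q_1 t}({\bf w})$ in two places: through the first slot of $h(q_1 t/Q,\psi_1({\bf y}))$, and through the phase $e(-B{\bf w}\cdot{\bf y}/(q_1^2 q_2 t))$. Differentiation yields a sum of two integrals, one with $h$ replaced by $(q_1/Q)\,\partial_1 h$, and one carrying an extra factor $B{\bf w}\cdot{\bf y}/(q_1^2 q_2 t^2)$. Lemma 14 controls $\partial_1 h$ on the same footing as $h$, so the first integral is handled by the same integration by parts used for (1); for the second, the linear factor is bounded by $B|{\bf w}|/(q_1^2 q_2 t^2)$ on the support of $W$, and the remaining oscillatory integral is again of type (1). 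In both cases the resulting bound is exactly (1) divided by $t$, which is (2).

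The main obstacle I anticipate is purely bookkeeping: translating Heath-Brown's normalization into the parameters $(q,c,Q,B)$ present here, and checking that differentiation in the first slot of $h$ does not degrade the gain obtained from integration by parts. No new analytic estimate beyond those already stated in \cite[Sections 7--8]{Heath} is required.
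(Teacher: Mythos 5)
Your proposal is correct and matches the paper's own treatment: the paper states this lemma without proof, simply citing Lemma 14 and Lemma 22 of Heath-Brown's Sections 7--8 via the identification $I_{q,c}({\bf w}) = I_r^*({\bf v})$ with $r = c/Q$, ${\bf v} = B{\bf w}/(qQ)$, which is exactly the normalization you compute. Your substitution $r|{\bf v}|^{-1} = q_1^2 q_2 t/(B|{\bf w}|)$ reproduces part (1) precisely, and your sketch of part (2) by differentiating under the integral sign (with $\partial_1 h$ controlled by Lemma 14 and the phase derivative contributing the $t^{-1}$ factor) is the natural way to fill in the detail the paper leaves implicit, though the second piece arising from differentiating the phase deserves a more careful treatment than "is again of type (1)" --- one should note that $({\bf w}\cdot{\bf y})\,e(-B{\bf w}\cdot{\bf y}/(qc))$ can be rewritten, up to the prefactor $qQ/B$, as a $\lambda$-derivative of $e(-\lambda{\bf v}\cdot{\bf y}/r)$ at $\lambda=1$, so the integration by parts that proves Lemma 22 again applies with the same gain.
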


The following lemma gives bound on the integral $I_{q,c}(\bf 0)$ in the case of zero frequency. 
\begin{lemma}
	We have $$I_{q,c}({\bf 0}) \ll 1.$$
\end{lemma}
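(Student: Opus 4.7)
The plan is to reduce the four-dimensional integral to a one-dimensional integral against $h(c/Q,\cdot)$ and then exploit the cancellation properties of Heath-Brown's function $h$. The na\"ive pointwise bound $h(c/Q,y)\ll Q/c$ is too weak: combined with the fact that $W$ has compact support in $[-1,1]^4$, it yields only $I_{q,c}(\mathbf{0})\ll Q/c$, which is non-trivial only in the range $c\asymp Q$. Hence I must use the oscillation of $h$ in its second variable.

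First I would fix a smooth partition of unity on the support of $W$ that separates a small neighbourhood of the set $\{\nabla\psi_{1}=\mathbf{0}\}$ from its complement. Away from this critical set, I would perform a smooth change of variables so that $t=\psi_{1}(\mathbf{y})$ becomes one of the coordinates; using the co-area formula, this transforms the integral into
\begin{equation*}
I_{q,c}(\mathbf{0})=\int_{\mathbb{R}} h\!\left(\tfrac{c}{Q},t\right) F(t)\,dt,
\end{equation*}
where $F(t)=\int_{\{\psi_{1}=t\}\cap\mathrm{supp}(W)} W(\mathbf{y})|\nabla\psi_{1}(\mathbf{y})|^{-1}\,d\sigma(\mathbf{y})$ is compactly supported. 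In the language of Heath-Brown \cite[Sections 7 and 8]{Heath}, with $r=c/Q$ and $\mathbf{v}=\mathbf{0}$ this is exactly $I_{r}^{*}(\mathbf{0})$, and the estimate $I_{r}^{*}(\mathbf{0})\ll 1$ follows from the one-dimensional integral bounds Heath-Brown establishes there (essentially Lemma~14 in the zero-frequency case), which rest on the fact that $\int h(r,t)G(t)\,dt\ll 1$ for any fixed smooth test function $G$, a property inherited from the construction of the $\delta$-symbol itself.

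The hard part will be controlling the contribution from a neighbourhood of the critical set $\{\nabla\psi_{1}=\mathbf{0}\}\cap\mathrm{supp}(W)$, where the above change of variables degenerates and $F(t)$ may blow up as $t\to 0$. Since $\psi_{1}$ is a non-degenerate quadratic form, the only critical point is the origin, and by a stationary phase/Morse-lemma analysis the density $F(t)$ has at worst a logarithmic singularity at $t=0$, which is integrable against the bounded quantity $h(c/Q,t)$ restricted by the support condition $c/Q\leq\max(1,2|t|)$. Combining the smooth part and the controlled singular part yields the uniform bound $I_{q,c}(\mathbf{0})\ll 1$, as required.
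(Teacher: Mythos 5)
Your approach is essentially the same as the paper's: recognise that $I_{q,c}(\mathbf{0}) = I_{r}^{*}(\mathbf{0})$ with $r = c/Q$ and cite Heath-Brown's integral estimates. The paper's entire proof is a one-line citation. However, you cite the \emph{wrong} lemma: the paper invokes Lemma~13 of \cite{Heath}, which is precisely the uniform bound $I_{r}^{*}(\mathbf{v}) \ll 1$, valid for all $\mathbf{v}$ including $\mathbf{v}=\mathbf{0}$. Your reference to ``Lemma~14 in the zero-frequency case'' is a misattribution --- Lemma~14 of \cite{Heath} is the rapid-decay estimate in $|\mathbf{v}|$, which is exactly the lemma the paper uses elsewhere for the \emph{nonzero} frequencies $\mathbf{w}\neq\mathbf{0}$ (see Lemma~\ref{sizeofw}), and it is not the right tool at $\mathbf{v}=\mathbf{0}$.

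The additional re-derivation you sketch (co-area reduction, partition of unity near the critical set) is unnecessary once the correct citation is in place, and it contains two minor inaccuracies worth flagging. First, $h(c/Q,t)$ is not ``bounded'' --- the available pointwise bound is $h(c/Q,t)\ll Q/c$, which can be large, so near the critical set one cannot simply pair a trivial bound on $h$ with integrability of $F$ and hope to recover $O(1)$; one still needs the cancellation in $h$ there too. Second, your claim that $F(t)$ has ``at worst a logarithmic singularity at $t=0$'' does not apply here: for a nondegenerate diagonal quadratic form in four variables the fibre density $F(t)=\int_{\{\psi_1=t\}}W\,|\nabla\psi_1|^{-1}\,d\sigma$ is in fact bounded near $t=0$ (the logarithmic blow-up is a two-variable phenomenon). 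So the delicate stationary-phase analysis you anticipate is not actually needed --- but neither is your sketch a substitute for Heath-Brown's Lemma~13, which is what actually carries the proof.
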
 
The proof of this lemma follows from the  Lemma 13 in \cite[Sections 7]{Heath}.
\section{Proof of the Proposition \ref{propontq}} \label{tqbound}
We need to estimate 
$$T_{q}^{*}(B) = \frac{B^2}{q^3}\sum_{{\bf w} \in \mathbb{Z}^4} \sum_{c=1}^{\infty} \frac{1}{c^4} \, S_{q,c}\left({\bf w}\right) \, I_{q,c}\left({\bf w}\right).$$
We have $c \ll Q= B/\sqrt{q}$ otherwise the integral $I_{q,c}$ vanishes. We also have, by Lemma \ref{sizeofw}, that 
$$\vert {\bf w} \vert \ll \, \frac{qQ}{B} \, B^{\epsilon} = \sqrt{q} B^{\epsilon}.$$

\subsection{Zero frequency: {\bf w}=0}
The contribution to $T_{q}(B)$ from the zero frequency is 
$$ \frac{B^2}{q^{3}} \sum_{c=1}^{\infty} \frac{1}{c^4} \, S_{q,c}\left({\bf 0}\right) \, I_{q,c}\left({\bf 0}\right).$$
And this is 
$$ \ll \frac{B^2}{q^{3}} \sum_{c \ll B/\sqrt{q}} \frac{1}{c^4} \, \vert S_{q,c}({\bf 0})\vert $$
as $I_{q,c}({\bf 0}) \ll 1$. By Corollary \ref{s1cbound} and Lemma \ref{spprbound}, we have
$$S_{q,c}({\bf 0}) \ll q^{3/2} \, c^{3}.$$
Thus, the zero frequency contribution is at most
$$ \frac{B^2}{q^{3}} \sum_{c \ll B/\sqrt{q}} \frac{1}{c^4} \, \vert S_{q,c}({\bf 0})\vert \ll \frac{B^2}{q^{3/2}}.$$

\subsection{Non-zero frequency: the case $\tilde{\psi}_{1}({\bf w}) \neq 0$}
We write 
$$J({\bf w}) = \sum_{\substack{c \leq B/\sqrt{q} \\ (c,q)=q_{1}}}\frac{1}{c^4} \, S_{q,c}\left({\bf w}\right) \, I_{q,c}\left({\bf w}\right),$$
where $q=q_{1}q_{2}$ be the product of square free numbers. We have, by Lemma \ref{integralesti} and Lemma \ref{sp1bound}, that
$$J({\bf w}) \ll \frac{q_{2}^{3/2} q}{B \, \vert {\bf w}\vert } \, \sup_{C \leq B/\sqrt{q}} \, \frac{1}{C^{3}}\, \sum_{\substack{C \leq c \leq 2C \\ (c,q)=q_{1}}}  \, \vert S_{q_{1},c}\vert .$$
We now use Lemma \ref{averagemodvalue} to estimate the sum on the right of above. Indeed, we have
$$J({\bf w}) \ll \frac{q^{5/2} }{B \vert {\bf w}\vert} \, B^{\epsilon}.$$
Therefore, the contribution to $T_{q}(B)$ coming from those ${\bf w} \neq 0$ for which $\tilde{\psi}_{1}({\bf w}) \neq 0$ is at most
$$\frac{B^{2}}{q^{3}} \sum_{\substack{0 < \vert {\bf w}\vert \ll \sqrt{q} B^{\epsilon} \\ \tilde{\psi}_{1}({\bf w}) \neq 0}} \, J({\bf w})$$
and which is bound by
$$\frac{B }{q^{1/2} } \sum_{\substack{0 < \vert {\bf w}\vert \ll \sqrt{q} B^{\epsilon} }} \frac{1}{\vert {\bf w}\vert} \ll q B B^{\epsilon} \leq q B^{1+\epsilon}$$
since 
$$\sum_{\substack{0 < \vert {\bf u}\vert \leq U} } \frac{1}{\vert {\bf U}\vert} \ll \, U^{3+\epsilon}.$$

\subsection{Non-zero frequency: the case $\tilde{\psi}_{1}({\bf w}) =0$}

We consider $J({\bf w})$ for ${\bf w} \neq 0$ with $\tilde{\psi}_{1}({\bf w}) =0$. In this case 
$$J({\bf w}) \leq \, \vert  S_{q_{2},1} ({\bf w}) \vert \, \sup _{C \leq B/\sqrt{q}} \,  \Big \vert \sum_{\substack{C \leq c \leq 2C \\ (c,q)=q_{1}}}\frac{1}{c^4} \, S_{q_{1},c}\left({\bf w}\right) \, I_{q,c}\left({\bf w}\right) \Big \vert.$$
By Lemma \ref{integralesti}, Lemma \ref{cancelationsum}, Lemma \ref{sp1bound} and partial summation we have that
$$J({\bf w}) \ll q_{2}^{3/2} \, \sup_{C \leq B/\sqrt{q} } \frac{q_{1}^{5/2} q_{2} C^{1/2}}{B \vert {\bf w}\vert } \, \leq \, \frac{q^{9/4} }{B^{1/2} \vert {\bf w} \vert}.$$
Hence, the contribution to $T_{q}(B)$ coming from those ${\bf w} \neq 0$ for which $\tilde{\psi}_{1}({\bf w}) = 0$ is at most

$$\frac{B^{2}}{q^{3}} \sum_{\substack{0 < \vert {\bf w}\vert \ll \sqrt{q} B^{\epsilon} \\ \tilde{\psi}_{1}({\bf w}) \neq 0}} \, J({\bf w}) \ll \frac{B^2}{q^{3}} \frac{q^{9/4}}{B^{1/2} } \sum_{\substack{0 < \vert {\bf w}\vert \ll \sqrt{q} B^{\epsilon} \\ \tilde{\psi}_{1}({\bf w}) =0}} \frac{1}{\vert {\bf w}\vert }.$$  
We can evaluate the last sum by breaking into dyadic blocks
$$\sum_{\substack{{\bf w} \\ W_{i} \leq {\bf w} \leq 2 W_{i} \\ \tilde{\psi}_{1}({\bf w})=0}} \frac{1}{\vert {\bf w}\vert }  \ll  \left(\max W_{i}\right)^{-1} \sum_{\substack{{\bf w} \\ W_{i} \leq {\bf w} \leq 2 W_{i} \\ \tilde{\psi}_{1}({\bf w})=0}} 1 \, \ll \,  \left(\max W_{i}\right)^{1+\epsilon},$$
where we used the definition of $\vert {\bf w}\vert = \max_{i} \, {\vert w_{i} \vert }$. Using this estimate we get that 
$$\frac{B^{2}}{q^{3}} \sum_{\substack{0 < \vert {\bf w}\vert \ll \sqrt{q} B^{\epsilon} \\ \tilde{\psi}_{1}({\bf w}) \neq 0}} \, J({\bf w}) \ll \, \frac{B^{3/2 + \epsilon}}{q^{1/4}}.$$

We conclude that
$$T_{q}(B) \ll \left( \frac{B^2}{q^{3/2}} + q B + \frac{B^{3/2}}{q^{1/4}} \right) B^{\epsilon}.$$
Hence Proposition \ref{propontq} holds.
\section{The conclusion: proof of the Proposition \ref{malpropo}} \label{final}
Recall from \eqref{mstarbound} that
$$M^{*}(B) \ll_{\epsilon} \frac{B^{2+\epsilon}}{P} + \frac{1}{P^{2}} \sum_{p_{1} \neq p_{2} \in \mathcal{P}} T_{p_{1}p_{2}}\left(B\right)$$
and $\mathcal{P}$ contains prime of size $P$. Thus for any $q = p_{1}p_{2}$ is of size $P^2$. On substituting the bound for $T_{q}(B)$ in the above inequality  we get
$$M^{*}(B) \ll_{\epsilon} \left(\frac{B^{2}}{P} +  \frac{B^2}{P^{3}} + P^{2} B + \frac{B^{3/2}}{P^{1/2}} \right) B^{\epsilon} .$$
The optimal choice for $P$ is $P=B^{1/3}$, for this choice we have the bound
$$M^{*}(B) \ll_{\epsilon} \ll B^{\frac{5}{3}+\epsilon},$$
hence Proposition \ref{malpropo} follows.

\section*{Acknowledgements}
I am thankful to Prof. Ritabrata Munshi for sharing his ideas and his support throughout the work. I also wish to thank Prof. Satadal Ganguly for his encouragement and  constant support. I am  grateful to  Stat-Math Unit, Indian Statistical Institute, Kolkata for providing wonderful research environment.

\end{document}